\date{\today}
\def\pee#1{\hbox{$ {\mathbf P}^{#1}$}}
\def\OP#1{\hbox{${\mathcal O}_{{\mathbf P}^{#1}}$}}
\def\coker{\hbox{\text{coker\ }}}
\def\im{\hbox{\text{im\ }}}
\theoremstyle{plain}
\newtheorem{theorem}{Theorem}[section]
\newtheorem{proposition}[theorem]{Proposition}
\newtheorem{remark}[theorem]{Remark}
\newtheorem{lemma}[theorem]{Lemma}
\newtheorem{definition}[theorem]{Definition}
\newtheorem{corollary}[theorem]{Corollary}
\newcommand\sA{{\mathcal A}}
\newcommand\sB{{\mathcal B}}
\newcommand\sC{{\mathcal C}}
\newcommand\sD{{\mathcal D}}
\newcommand\sE{{\mathcal E}}
\newcommand\sF{{\mathcal F}}
\newcommand\sG{{\mathcal G}}
\newcommand\sL{{\mathcal L}}
\newcommand\sO{{\mathcal O}}
\newcommand\sP{{\mathcal P}}
\newcommand\sQ{{\mathcal Q}}
\date{}
\begin{document}
\title{Rank two bundles on \pee n with isolated cohomology}

\author{F. Malaspina and A.P. Rao
\vspace{6pt}\\
 {\small  Politecnico di Torino}\\
{\small\it  Corso Duca degli Abruzzi 24, 10129 Torino, Italy}\\
{\small\it e-mail: malaspina@calvino.polito.it}\\
\vspace{6pt}\\
{\small   University of Missouri-St.Louis}\\
 {\small\it St.Louis, MO 63121, United States}\\
{\small\it e-mail: raoa@umsl.edu}}
\maketitle \def\thefootnote{}
 \footnote{\noindent Mathematics Subject Classification 2010:
14F06, 14J60.
\\  Keywords: vector bundles, cohomology modules, Monads}

\bibliographystyle{plain}
\maketitle

\begin{abstract}The purpose of this paper is to study minimal monads associated to a rank two vector bundle $\sE$ on \pee n. In particular, we study situations where  $\sE$  has $H^i_*(\sE) =0$ for $1<i<n-1$, except for  one pair of values $(k,n-k)$. We show that on \pee 8, if $H^3_*(\sE)=H^4_*(\sE)=0$, then $\sE$ must be decomposable. More generally, we show that for $n\geq 4k$, there is no indecomposable bundle $\sE$ for which all intermediate cohomology modules except for $H^1_*, H^k_*, H^{n-k}_*, H^{n-1}_*$  are zero.
\end{abstract}

\section*{Introduction}

 It has been difficult to disprove the existence of an indecomposable rank two bundle $\sE$ on \pee n for large $n$. Most known results have been obtained by imposing other conditions on $\sE$ to show that $\sE$ cannot exist or must be split. For example, the so-called Babylonian  condition which requires $\sE$ to be extendable to \pee {n+m} for every $m$ has been studied by a number of people including Barth and van de Ven \cite {B-V} and  Coanda and Trautmann \cite{C-T}. Numerical criteria that force splitting are found again in Barth and van de Ven, where for a normalized rank two bundle with second Chern class $a$ and with splitting type $\sO_l(-b) \oplus \sO_l(b)$ on the general line $l$, a function $f(a,b)$ is found such that if $n > f(a,b)$, then a bundle on \pee n with these invariants must be split.

Cohomological criteria for forcing the splitting of $\sE$ start with Horrocks \cite{Horrocks}. If $S$ is the polynomial ring corresponding to \pee n, then $H^i_*(\sE)$ (defined as $\oplus_{\nu}H^i(\pee n, \sE(\nu))$)  is an $S$-module. The intermediate cohomology modules $H^i_*sE), 1\leq i \leq n-1$ are all graded modules of finite length and there is a strong relationship between $\sE$ and its intermediate cohomology modules. He shows that if $H^i_*(\sE)=0$ for all $i$ with $i\leq i \leq n-1$, then $\sE$ is split. Moreover Horrocks in \cite{Horrocks} established that a vector bundle on \pee n
is determined up to isomorphism and up to a sum of line bundles ({\it i.e.} up to stable equivalence) by its collection of
intermediate cohomology modules and also a certain collection of extension classes involving these modules. This correspondence has been generalized to any ACM varieties in \cite{MM}. The Syzygy Theorem (\cite{EG}, \cite{E}) shows that for a rank two bundle $\sE$, it is enough to know that $H^1_*(\sE) =0$ to force splitting. In \cite{M-P-R}, it is shown that for a indecomposable rank two bundle on \pee n, in addition to $H^1_*(\sE)$ and $H^{n-1}_*(\sE)$ being non-zero, some intermediate cohomology module $H^k_*(\sE)$ ($1<k<n-1$) (and hence also $H^{n-k}_*(\sE)$) must be non-zero. Various calculations in \cite{M-R} and \cite{M-P-R} show that there are limitations on the module structure of $H^1_*(\sE)$ and $H^2_*(\sE)$ for some values of $n$.

In this paper, we study situations where a rank two bundle $\sE$ on \pee n has $H^i_*(\sE) =0$ for $1<i<n-1$, except for  one pair of values $(k,n-k)$. We describe the minimal monads associated to $\sE$. We show that on \pee 8, if $H^3_*(\sE)=H^4_*(\sE)=0$, then $\sE$ must be decomposable. More generally, we show that for $n\geq 4k$, there is no indecomposable bundle $\sE$ for which all intermediate cohomology modules except for $H^1_*, H^k_*, H^{n-k}_*, H^{n-1}_*$  are zero. The proof utilizes the space between $k$ and $n-k$ when $n\geq 4k$ for making cohomological computations.

\section{Monads for rank two vector bundles on \pee{n}}

Let  $\sE$ be an indecomposable  rank two vector bundle on \pee{n}. If $S$ is the polynomial ring on $n+1$ variables, let $N_i= H^i_*(\sE) = \oplus_\nu H^i(\sE(\nu))$ be the finite length graded $S$-module over $S$, for $1\leq i \leq n-1$.
By the Syzygy Theorem, both $N_1$ and $N_{n-1}$ are non-zero modules.
Barth's construction of a minimal monad for $\sE$ gives a complex
 $$ 0 \to \sA \xrightarrow{\alpha} \sP \xrightarrow{\beta} \sB \to 0, $$
where $\sP$ is a bundle with $H^i_*(\sP)=0$ for $i=1$ and $i=n-1$, and  where $\sA, \sB$ are free bundles.  Let $\sG$ be kernel $\beta$.
We have two sequences  \begin{equation} \begin{split}0 \to \sG \to \sP \to \sB \to 0, \\
0\to  \sA \to \sG \to \sE \to 0,\end{split}\end{equation} from which we see that $H^i_*(\sE) = H^i_*(\sG) $ for $1 \leq i \leq n-2$, while $H^{n-1}_*(\sG) =0$, and  $H^i_*(\sE) = H^i_*(\sP) $ for $2 \leq i \leq n-2$,  while $H^{1}_*(\sP)= H^{n-1}_*(\sP)=0$.

They give rise to
\begin{equation} \label{wedgetwos}\begin{split} 0\to \wedge^2\sG \to \wedge^2 \sP \to \sB\otimes \sP \to S^2\sB \to 0, \\
0 \to S^2\sA \to \sA \otimes \sG \to \wedge^2 \sG \to \wedge^2 \sE\to 0.
\end{split}\end{equation}

\begin{lemma}\label{H1(wedge2) is non-zero}  If $H^2_*(\sE)=0$, then $H^1_*(\wedge^2 \sP)$ and $H^{n-1}_*(\wedge^2 \sP)$ are non-zero. If $H^l_*(\sE) = 0$ for some $l$, with $2 \leq l \leq n-2$, then $H^l_*(\wedge^2 \sP) =0.$
\end{lemma}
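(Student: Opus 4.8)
The plan is to split the two four-term sequences in (\ref{wedgetwos}) into short exact sequences and track cohomology, using throughout that free bundles and the line bundle $\wedge^2\sE=\sO(c_1)$ carry no intermediate cohomology. Write $\sK=\im(\sA\otimes\sG\to\wedge^2\sG)$ and $\sL=\im(\wedge^2\sP\to\sB\otimes\sP)$, so that (\ref{wedgetwos}) breaks into $0\to S^2\sA\to\sA\otimes\sG\to\sK\to0$, $\ 0\to\sK\to\wedge^2\sG\to\wedge^2\sE\to0$, $\ 0\to\wedge^2\sG\to\wedge^2\sP\to\sL\to0$ and $0\to\sL\to\sB\otimes\sP\to S^2\sB\to0$. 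Since $\sA,\sB$ are free, $S^2\sA$ and $S^2\sB$ are free, while $\sA\otimes\sG$ and $\sB\otimes\sP$ are finite direct sums of twists of $\sG$ and $\sP$; hence their intermediate cohomology is a direct sum of twists of $H^i_*(\sG)=H^i_*(\sE)$ (for $1\le i\le n-2$) and of $H^i_*(\sP)$ respectively.

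For the vanishing statement, fix $l$ with $2\le l\le n-2$. Chasing the first two sequences, and using $H^{l}_*(\wedge^2\sE)=H^{l-1}_*(\wedge^2\sE)=0$ and $H^l_*(S^2\sA)=H^{l+1}_*(S^2\sA)=0$, gives $H^l_*(\wedge^2\sG)\cong H^l_*(\sK)\cong H^l_*(\sA\otimes\sG)$, a sum of twists of $H^l_*(\sE)$. The last two sequences give $H^l_*(\sL)\cong H^l_*(\sB\otimes\sP)$, a sum of twists of $H^l_*(\sP)=H^l_*(\sE)$. So if $H^l_*(\sE)=0$ both groups vanish, and the sequence $0\to\wedge^2\sG\to\wedge^2\sP\to\sL\to0$ forces $H^l_*(\wedge^2\sP)=0$.

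For the non-vanishing statement assume $H^2_*(\sE)=0$. The chase at $l=2$ (valid once $n\ge4$) yields $H^2_*(\wedge^2\sG)=0$. From $0\to\sL\to\sB\otimes\sP\to S^2\sB\to0$ and $H^1_*(\sP)=0$ we obtain $H^1_*(\sL)\cong\coker\bigl(H^0_*(\sB\otimes\sP)\to H^0_*(S^2\sB)\bigr)$. The sheaf map $\sB\otimes\sP\to S^2\sB$ is $b\otimes p\mapsto b\cdot\beta(p)$, so on sections its image is the submodule $H^0_*(\sB)\cdot\im(\beta_*)$ of $H^0_*(S^2\sB)=S^2H^0_*(\sB)$, where $\beta_*\colon H^0_*(\sP)\to H^0_*(\sB)$. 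Because $H^1_*(\sP)=0$, the sequence $0\to\sG\to\sP\to\sB\to0$ identifies $\coker(\beta_*)$ with $H^1_*(\sG)=H^1_*(\sE)=N_1$, and by the right exactness of $S^2$ this gives $H^1_*(\sL)\cong S^2(N_1)$, which is non-zero since $N_1\ne0$ by the Syzygy Theorem. Finally $H^2_*(\wedge^2\sG)=0$ makes $H^1_*(\wedge^2\sP)\to H^1_*(\sL)$ surjective in $0\to\wedge^2\sG\to\wedge^2\sP\to\sL\to0$, whence $H^1_*(\wedge^2\sP)\ne0$.

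For $H^{n-1}_*(\wedge^2\sP)$ the cleanest route is duality. Dualizing Barth's monad and using the uniqueness of the minimal monad together with $\sE\cong\sE^\vee(c_1)$ (valid since $\sE$ has rank two) gives $\sP\cong\sP^\vee(c_1)$, hence $\wedge^2\sP\cong(\wedge^2\sP)^\vee(2c_1)$; Serre duality then converts $H^{n-1}_*(\wedge^2\sP)$ into a twist of the dual of $H^1_*(\wedge^2\sP)$, so non-vanishing of the latter yields non-vanishing of the former. I expect the main obstacle to be making the isomorphism $H^1_*(\sL)\cong S^2(N_1)$ precise as graded $S$-modules and justifying the self-duality $\sP\cong\sP^\vee(c_1)$ of the minimal monad; a direct alternative for $H^{n-1}$ is to run the dual cohomology chase through the first two sequences, in which the twists of $H^n_*(S^2\sA)$ play the role taken above by $H^0_*(S^2\sB)$.
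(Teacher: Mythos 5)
Your handling of the vanishing statement is essentially the paper's own proof: the same breakup of the two four-term sequences in (\ref{wedgetwos}) into short exact sequences and the same chase, using that $\sG$ and $\sP$ inherit $H^l_*(\sE)=0$ while $S^2\sA$, $S^2\sB$, $\wedge^2\sE$ contribute nothing in the relevant range.

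The genuine difference is the non-vanishing statement, which the paper does not prove at all -- it cites Theorem 2.2 of \cite{M} -- whereas you give a self-contained argument, and it is sound. Your key computation is correct: since $H^1_*(\sP)=0$, the sequence $0\to\sL\to\sB\otimes\sP\to S^2\sB\to 0$ gives $H^1_*(\sL)\cong\coker\bigl(B\otimes H^0_*(\sP)\to S^2B\bigr)$, and the standard identification $S^2\bigl(B/\im \beta_*\bigr)\cong S^2B/\langle B\cdot\im \beta_*\rangle$ together with $\coker \beta_*\cong H^1_*(\sG)=N_1$ yields $H^1_*(\sL)\cong S^2(N_1)$, which is non-zero because $S^2(N_1)$ surjects onto $S^2(N_1\otimes k)\neq 0$; the hypothesis $H^2_*(\sE)=0$ enters exactly where you say, to kill $H^2_*(\wedge^2\sG)$ and make $H^1_*(\wedge^2\sP)\to H^1_*(\sL)$ onto. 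One caution on the $H^{n-1}_*$ half: your preferred route via $\sP\cong\sP^\vee(c_1)$ leans on the uniqueness of the minimal monad, a Horrocks--Barth fact the paper never states, so it imports a nontrivial result. Your fallback dual chase avoids this and does close completely: $H^{n-1}_*(\sA\otimes\sG)=0$ gives $H^{n-1}_*(\sK)\cong\ker\bigl(H^n_*(S^2\sA)\to H^n_*(\sA\otimes\sG)\bigr)$, which is Serre-dual to $S^2\bigl(\coker(H^0_*(\sG^\vee)\to H^0_*(\sA^\vee))\bigr)\cong S^2\bigl(H^1_*(\sE^\vee)\bigr)\neq 0$ (here $H^1_*(\sG^\vee)\cong H^{n-1}_*(\sG)^\vee=0$ is what makes the cokernel all of $H^1_*(\sE^\vee)$), hence $H^{n-1}_*(\wedge^2\sG)\cong H^{n-1}_*(\sK)\neq 0$; and this injects into $H^{n-1}_*(\wedge^2\sP)$ because $H^{n-2}_*(\sL)\cong B\otimes N_{n-2}$ and $N_{n-2}\cong N_2^\vee=0$ by rank-two Serre duality -- again using $H^2_*(\sE)=0$. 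So prefer that version. What your route buys is independence from \cite{M} and a transparent accounting of where $H^2_*(\sE)=0$ and the Syzygy Theorem ($N_1,N_{n-1}\neq 0$, so indecomposability is essential) are actually used; what the paper's citation buys is brevity.
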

\begin{proof}
See \cite{M} Theorem 2.2. for the first part. Next, suppose $H^l_*(\sE) = 0$ for some $l$, with $2 \leq l \leq n-2$. So $N_l = N_{n-l}=0$ by Serre duality.

Since $\sG$ and $\sP$  have $H^l_* =0$ as well, it follows from equation (\ref{wedgetwos}), that   $H^l_*(\wedge^2 \sG) = 0$ and hence $H^l_*(\wedge^2 \sP) =0.$
\end{proof}

\begin{lemma} \label{cohomology of wedge 2 P} Let $2\leq t \leq n-2$. Let $A =H^0_*(\sA), B = H^0_*(\sB)$. There is an exact sequence
$$ A\otimes N_t \to H^t_*(\wedge^2 \sP) \to B\otimes N_t $$
which is injective on the left if $t\geq 3$ and $N_{t-1}=0$, and is surjective on the right if $t\leq n-3$ and $N_{t+1}=0$.
\end{lemma}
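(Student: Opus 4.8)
The plan is to extract the claimed three-term sequence from the two four-term complexes in (\ref{wedgetwos}) by splitting each into short exact sequences and chasing cohomology, exploiting two facts: $\sA$ and $\sB$ are free (so $S^2\sA$, $S^2\sB$, $\sA\otimes\sG$, $\sB\otimes\sP$ have controllable cohomology), and $\wedge^2\sE$ is a line bundle (so its intermediate cohomology vanishes in degrees $1,\dots,n-1$).

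First I would split the second sequence of (\ref{wedgetwos}) as $0\to S^2\sA\to\sA\otimes\sG\to\sL\to 0$ and $0\to\sL\to\wedge^2\sG\to\wedge^2\sE\to 0$, where $\sL$ denotes the image of $\sA\otimes\sG\to\wedge^2\sG$. Since $S^2\sA$ is free, $H^i_*(S^2\sA)=0$ for $1\le i\le n-1$, and since $\wedge^2\sE$ is a line bundle, $H^i_*(\wedge^2\sE)=0$ for $1\le i\le n-1$. Chasing the two long exact sequences in the range $2\le t\le n-2$ then produces natural isomorphisms $A\otimes N_t = H^t_*(\sA\otimes\sG)\xrightarrow{\sim} H^t_*(\sL)\xrightarrow{\sim}H^t_*(\wedge^2\sG)$, using $H^t_*(\sA\otimes\sG)=A\otimes H^t_*(\sG)=A\otimes N_t$. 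In the same way, splitting the first sequence of (\ref{wedgetwos}) as $0\to\wedge^2\sG\to\wedge^2\sP\to\sK\to 0$ and $0\to\sK\to\sB\otimes\sP\to S^2\sB\to 0$, with $\sK$ the image of $\wedge^2\sP\to\sB\otimes\sP$, the freeness of $S^2\sB$ yields $H^t_*(\sK)\xrightarrow{\sim}H^t_*(\sB\otimes\sP)=B\otimes N_t$ for $2\le t\le n-2$.

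Next I would feed these identifications into the long exact sequence of $0\to\wedge^2\sG\to\wedge^2\sP\to\sK\to 0$, namely
$$H^{t-1}_*(\sK)\to H^t_*(\wedge^2\sG)\to H^t_*(\wedge^2\sP)\to H^t_*(\sK)\xrightarrow{\delta}H^{t+1}_*(\wedge^2\sG).$$
Replacing $H^t_*(\wedge^2\sG)$ by $A\otimes N_t$ and $H^t_*(\sK)$ by $B\otimes N_t$ gives the claimed sequence $A\otimes N_t\to H^t_*(\wedge^2\sP)\to B\otimes N_t$, whose exactness in the middle is immediate from exactness at $H^t_*(\wedge^2\sP)$. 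For left injectivity I would note that the kernel of $H^t_*(\wedge^2\sG)\to H^t_*(\wedge^2\sP)$ is the image of the connecting map out of $H^{t-1}_*(\sK)$; when $t\ge 3$ the identification $H^{t-1}_*(\sK)=B\otimes N_{t-1}$ applies, so $N_{t-1}=0$ kills this term and forces injectivity. Symmetrically, the right map is surjective exactly when $\delta=0$, which holds once $H^{t+1}_*(\wedge^2\sG)=A\otimes N_{t+1}$ vanishes; this identification is valid for $t\le n-3$, so $N_{t+1}=0$ gives surjectivity.

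The diagram chases themselves are routine; the only genuine care is the bookkeeping of the range of $t$ for which each identification holds, since the isomorphisms for $H^t_*(\wedge^2\sG)$, $H^t_*(\sK)$, and their shifts each impose their own constraints (namely $2\le t\le n-2$ for the basic ones, $t\ge 3$ for the $H^{t-1}_*(\sK)$ computation, and $t\le n-3$ for the $H^{t+1}_*(\wedge^2\sG)$ computation). Verifying that no boundary index escapes the vanishing ranges of the free bundles and of the line bundle $\wedge^2\sE$ is the main place an error could creep in, so I would check the endpoints $t=2$, $t=n-2$ and the shifted indices explicitly.
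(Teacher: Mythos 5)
Your proof is correct and takes essentially the same route as the paper's: you split the two four-term complexes in (\ref{wedgetwos}) at the same intermediate images (your $\sK$ and $\sL$ are the paper's $\sD$ and $\sC$), establish the same identifications $H^t_*(\sK)\cong B\otimes N_t$ and $H^t_*(\wedge^2\sG)\cong A\otimes N_t$ via the freeness of $S^2\sA$, $S^2\sB$ and the vanishing of intermediate cohomology of the line bundle $\wedge^2\sE$, and extract the lemma from the same long exact sequence with the same range bookkeeping ($t\geq 3$ for the $H^{t-1}$ term, $t\leq n-3$ for the $H^{t+1}$ term). There is no gap.
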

\begin{proof} Break up the first sequence in \ref{wedgetwos} as $0\to \wedge^2\sG \to \wedge^2 \sP \to  \sD \to 0$, $0 \to \sD \to \sB\otimes \sP \to S^2\sB \to 0$. We get long exact sequences
$$ H^{t-1}_*(\sD) \to H^{t}_*(\wedge^2\sG)  \to H^{t}_*(\wedge^2 \sP) \to H^{t}_*(\sD) \to H^{t+1}_*(\wedge^2\sG),$$
where $H^t_*(\sD) \cong B\otimes N_t$ (always) and $H^{t-1}_*(\sD)  \cong B\otimes N_{t-1}$ provided $t \geq 3$.
Likewise break up the second sequence as $0 \to S^2\sA \to \sA \otimes \sG \to \sC \to 0,$ $0\to \sC \to \wedge^2 \sG \to \wedge^2 \sE\to 0.$ We see that $H^i_*(\wedge^2\sG) \cong H^i_*(\sC)$ for $i=t,t+1$, $H^t_*(\sC)\cong A \otimes N_t $ and when $t \leq n-3$, $H^{t+1}_*(\sC)\cong A \otimes N_{t+1} $.

\end{proof}

The following proposition is a typical one that shows that a minimal monad for a rank two bundle is built very minimally out of the cohomological data for $\sE$. Other examples of such a result can be found in \cite{Rao}, \cite{M-R}. Decker (\cite{Decker})  has conjectured such a minimality for rank two bundles on $\pee 4$.

\begin{proposition} Suppose $\sE$ is a non-split rank two bundle on $\pee n$ ($n\geq 6$), with $H^l_*(\sE)=0$ for some $l$ with $2 \leq l \leq n-2$. Then in the minimal monad for $\sE$, the bundle $\sP$ has no line bundle summands.
\end{proposition}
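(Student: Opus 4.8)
The plan is to argue by contradiction: suppose the bundle $\sP$ in the minimal monad has a line bundle summand, write $\sP\cong\sO(a)\oplus\sP'$, and produce either a splitting of $\sE$ or a cohomological contradiction. Two structural facts drive everything. First, since $\sE$ has rank two, $\sE^\vee\cong\sE(-c_1)$ with $\sO(c_1)=\wedge^2\sE$; dualizing the monad $0\to\sA\to\sP\to\sB\to0$ and twisting by $c_1$ yields another minimal monad for $\sE$, so by uniqueness of the minimal monad one gets the self-duality $\sP\cong\sP^\vee(c_1)$ together with $\sA\cong\sB^\vee(c_1)$; in particular $\sO(c_1-a)$ is again a summand of $\sP$. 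Second, minimality of the monad means exactly that the matrices of $\alpha$ and $\beta$ carry no unit (degree zero) entries.

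First I would attempt to decouple $\sO(a)$ from the monad maps. Write $\beta=(v,\beta')$ with $v=\beta|_{\sO(a)}$ and $\beta'=\beta|_{\sP'}$; surjectivity of $\beta$ forces $v$ to surject onto $\sQ=\coker\beta'$. If $\sQ=0$, then $v$ factors through $\beta'$, so a triangular automorphism of $\sP$ fixing $\sO(a)$ replaces $\beta$ by one with $v=0$, and dually (via $\sA\cong\sB^\vee(c_1)$) one arranges $\im\alpha\subseteq\sP'$ by a complementary automorphism that preserves $\beta|_{\sO(a)}=0$; then $\sO(a)\subseteq\ker\beta=\sG$ is avoided by $\im\alpha$, so $\sE=\sG/\im\alpha$ acquires $\sO(a)$ as a summand, which is impossible for a non-split rank two bundle. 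If instead $\sQ$ is a nonzero line bundle, the surjection $\sO(a)\twoheadrightarrow\sQ$ is an isomorphism, exhibiting $\sQ$ as a summand of $\sB$ and a unit entry in $\beta$, contradicting minimality. So the only surviving possibility is that $\sQ=\coker\beta'$ is a nonzero torsion sheaf.

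This coupled case, where $\beta'$ drops rank along a degeneracy locus on which $v$ compensates, is the genuine obstacle, and it is precisely where the hypotheses $H^l_*(\sE)=0$ (so $N_l=N_{n-l}=0$ by Serre duality) and $n\geq6$ must enter. The plan is to feed the summand into $\wedge^2\sP$. From $\sP=\sO(a)\oplus\sP'$ one has $\wedge^2\sP=\sP'(a)\oplus\wedge^2\sP'$, and since $\sO(2a)$ has no intermediate cohomology, $H^t_*(\wedge^2\sP)$ contains the twisted summand $N_t(a)$ for every $2\leq t\leq n-2$. Tracing this summand through the four term sequences (\ref{wedgetwos}) that define the maps of Lemma \ref{cohomology of wedge 2 P}, the composite $N_t(a)\hookrightarrow H^t_*(\wedge^2\sP)\to B\otimes N_t$ should be exactly $v\otimes\mathrm{id}_{N_t}$ (the contribution of $\beta'$ dies because $H^t_*(\beta')=0$ in this range), hence injective once $v\neq0$; dually $A\otimes N_t\to H^t_*(\wedge^2\sP)$ surjects onto the summand $N_t(c_1-a)$ coming from $\sO(c_1-a)$.

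To finish I would apply Lemma \ref{cohomology of wedge 2 P} at a degree adjacent to the vanishing: because $N_l=0$, the right hand map is surjective at $t=l-1$ and the left hand map is injective at $t=l+1$, while some $N_t$ with $t\in[2,n-2]$ is nonzero by \cite{M-P-R}, all the necessary room being supplied by $n\geq6$. Combining the injectivity of $v\otimes\mathrm{id}$ on $N_t(a)$ with the exactness $\im\psi=\ker\phi$ of Lemma \ref{cohomology of wedge 2 P}, and with the dual statement for $N_t(c_1-a)$, should force the twisted summand to lie simultaneously in the kernel and to inject, the desired contradiction (immediate when $2a=c_1$). The hard part will be exactly this coupled case: making rigorous the passage through the four term sequences so that the line bundle summand is tracked precisely, and organizing the left and right exactness of Lemma \ref{cohomology of wedge 2 P} together with the self-duality so that the contradiction is reached for every admissible twist $a$, and not merely for the symmetric value $2a=c_1$.
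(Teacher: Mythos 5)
Your skeleton is recognizably the paper's: reduce to a line bundle summand $\sO(a)$ of $\sP$, decompose $\wedge^2\sP \cong \wedge^2\sP' \oplus \sP'(a)$ so that $H^t_*(\wedge^2\sP)$ acquires the summand $N_t(a)$, identify the restricted maps in Lemma \ref{cohomology of wedge 2 P} as the components $\alpha_2\otimes I$, $\beta_2\otimes I$ (your $v\otimes\mathrm{id}$) of the monad maps, and exploit minimality at a degree adjacent to the vanishing index. But the step that is supposed to produce the contradiction is false. You claim that $v\otimes\mathrm{id}_{N_t}\colon N_t(a)\to B\otimes N_t$ is \emph{injective} once $v\neq 0$, and dually that $A\otimes N_t$ \emph{surjects} onto the summand $N_t(c_1-a)$. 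Neither can hold: $N_t$ is a nonzero graded module of \emph{finite length}, and minimality of the monad forces every nonzero entry of $v$ (and of the relevant component of $\alpha$) to be a form of strictly positive degree. Multiplication by positive-degree forms annihilates the top nonzero graded piece of $N_t$, so $v\otimes\mathrm{id}$ always has nonzero kernel; equivalently, these maps vanish modulo the irrelevant maximal ideal, and a map that is zero mod the maximal ideal can never be injective on, or surject onto, a nonzero finite-length module (Nakayama). Since your final contradiction rests entirely on this injectivity/surjectivity, the coupled case --- which you correctly single out as the heart of the matter --- remains open.

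The repair is to run the argument in exactly the opposite direction, which is what the paper does. Choose $l$ \emph{minimal} among the vanishing indices; if $l\geq 3$ then $N_{l-1}\neq 0$ and Lemma \ref{cohomology of wedge 2 P} (with $t=l-1$) gives the exact sequence $A\otimes N_{l-1} \to H^{l-1}_*(\wedge^2\sQ)\oplus[S(a)\otimes N_{l-1}] \to B\otimes N_{l-1} \to 0$; if $l=2$, use \cite{M-P-R} and Serre duality to find $l'$ with $3\leq l'\leq n-3$, $N_{l'}\neq 0$, $N_{l'+1}=0$, and work at $t=l'$. Now tensor with the residue field $k$: right-exactness is preserved, the component $\beta_2\otimes I$ of the right-hand map dies, so $S(a)\otimes N_{l-1}\otimes k$ lies in the image of $(A\otimes N_{l-1})\otimes k$; but the component of the left-hand map into $S(a)\otimes N_{l-1}\otimes k$ is $(\alpha_2\otimes I)\otimes k=0$ as well, so $S(a)\otimes N_{l-1}\otimes k=0$, whence $N_{l-1}=0$ by Nakayama --- the contradiction. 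Note that no self-duality of the minimal monad is needed (minimality of both $\alpha$ and $\beta$ suffices), and your preliminary trichotomy on $\coker\beta'$ is dispensable; moreover its $\coker\beta'=0$ branch has its own gap, since surjectivity of $\beta'$ as a sheaf map does not let you lift $v$ through $\beta'$: the obstruction lies in $\coker\bigl(H^0_*(\sP')\to H^0_*(\sB)\bigr)$, which need not vanish.
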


\begin{proof}
Note that the statement is vacuous for  $n= 4, 5$, since $\sE$ will be split by \cite{M-P-R}.  So assume that $n\geq 6$ and that $\sE$ satisfies  $H^l_*(\sE) =0$ for some $2\leq l \leq n-2$. By  \cite{M-P-R}, there must also be a $j$ such that $H^j_*(\sE) \neq 0$ for some $2 \leq j \leq n-2$.

We may choose $l$  to be the lowest value with $H^l_*(\sE) =0$ and let us suppose that $l\geq 3$. Then $H^{l-1}_*(\sE) = N_{l-1}\neq 0$. Consider the exact sequence using Lemma \ref{cohomology of wedge 2 P} (with $t= l-1$)
$$ A\otimes N_{l-1} \to H^{l-1}_*(\wedge^2 \sP) \to B\otimes N_{l-1}\to 0.$$

Now if $\sP \cong \sQ \oplus \OP {} (a)$, then $H^{l-1}_*(\wedge^2 \sP) \cong H^{l-1}_*(\wedge^2 \sQ) \oplus [S(a) \otimes N_{l-1}]$, where $N_{l-1} \neq 0$.
The map $S(a)\otimes N_{l-1} \to B\otimes N_{l-1}$ in the sequence is induced by the map $\OP {}(a) \otimes \sP_k \xrightarrow{\beta_2\otimes I} \sB \otimes \sP_k$, where $\beta = [\beta_1, \beta_2]$ in the monad for $\sE$.

The map $A\otimes N_{l-1} \to S(a)\otimes N_{l-1}$ is induced by the map $\sA\otimes \sG \to \wedge^2 \sG \hookrightarrow \wedge^2 \sP \twoheadrightarrow \OP{}(a) \otimes \sP$, hence by $\sA \otimes \sP \xrightarrow{\alpha_2 \otimes I} \sL\otimes \sP$ if $\alpha = [\alpha_1, \alpha_2]^T$ in the monad.

The sequence above now reads
$$   A\otimes N_{l-1} \xrightarrow {\bmatrix *\\ \alpha_2\otimes I\endbmatrix} H^{l-1}_*(\wedge^2 \sQ) \oplus [S(a) \otimes N_{l-1}] \xrightarrow {\bmatrix *, \beta_2\otimes I\endbmatrix} B \otimes N_{l-1} \to 0 $$

If we tensor the sequence  by the quotient $k = S/(X_0,\dots, X_{n+1})$, since  the matrix $\beta_2$ is a minimal matrix, $(\beta_2\otimes I)\otimes k =0$, hence $[S(a)\otimes N_{l-1}\otimes k]$ is inside the kernel of ${\bmatrix *, \beta_2\otimes I \endbmatrix} \otimes k$.  By exactness, $S(a)\otimes N_{l-1} \otimes k$ is inside the image of $(\alpha_2\otimes I)\otimes k$.  which is not possible since $\alpha_2$ is also a minimal matrix.

It remains to study the case where $l=2$. There is a value  $l'$ between $3$ and $n-3$ for which $H^{l'}_*(\sE) = N_{l'} \neq 0$ and $H^{l'+1}_*(\sE) = 0$. We now have an exact sequence of non-zero $S$-modules
$$ A\otimes N_{l'} \to H^{l'}_*(\wedge^2 \sP) \to B\otimes N_{l'}\to 0$$
and we repeat the earlier argument to get a contradiction.
\end{proof}

\begin{definition} A rank two bundle $\sE$ on \pee n, $n\geq 6$ will be said to have isolated cohomology of type $(n,k)$ if there exists an integer $k$, $1<k\leq\frac n2$, with $H^k_*(\sE)$ and $H^{n-k}_*(\sE)$ non-zero modules, and $H^i_*(\sE) =0$ for $i \neq 1, k, n-k, n-1$.
\end{definition}
\begin{remark} By Lemma\ref{H1(wedge2) is non-zero} we get that if $\sE$ has isolated cohomology of type $(n,k)$, then $H^i_*(\wedge^2 \sP) =0$ for $i \neq 1, k, n-k, n-1$.
\end{remark}

A special case in the definition is when the middle cohomology is not zero, {\it ie.} of type $(n, k)$, where $n$ is even, equal to $2k$, and the only non-zero cohomology modules are $H^1_*(\sE), H^k_*(\sE),H^{n-1}_*(\sE)$.

Note that the conditions that $H^1_*(\sE), H^{n-1}_*(\sE)$ are both non-zero for an indecomposable rank two bundle follows from the Syzygy Theorem. In \cite{M-P-R}, it is proved that for an indecomposable rank two bundle on \pee n, $n\geq 4$, at least one cohomology module $H^l_*(\sE)$ must be non-zero with $1<l<n-1$. The reason $n$ is chosen to be $\geq 6$ in the definition is that first, the definition is vacuous for $n=2,3$ and second, for $n=4,5$, $k$ must be $2$, and the definition made is  always satisfied by any possible indecomposable rank two bundle on \pee 4 or \pee 5, hence imposes no restrictions.

Let  $\sP_k(N)$ be the $k^{th}$ syzygy bundle of the finite length module $N$. By this, we mean that in a minimal free resolution for $N$ over the polynomial ring $S$:
$$0\to L_{n+1}\xrightarrow{f_{n+1}} L_{n} \to \dots \to L_{k+1}\xrightarrow{f_{k+1}} L_k\to \dots \to L_1\xrightarrow{f_1} L_0 \to N\to 0,$$
 $P_k(N)$ will denote the image of $f_{k+1}$ and $\sP_k(N)$ will denote the sheafification of $P_k(N)$. Hence $H^k_*(\sP_k(N))=N$, with $H^i_*(\sP_k(N))= 0 {\text {\ when\  }}i \neq 0, k, n$.  According to \cite{Horrocks}, if $\sP$ is any bundle on \pee n with the property that $H^k_*(\sP) =N$ and $H^i_*(\sP)= 0 {\text {\ when\  }}i \neq 0, k, n$, then
 $\sP \cong \sP_k(N) \oplus \sF$ where $\sF$ is a direct sum of line bundles.

\begin{lemma} Let $\sP$ be a vector bundle on \pee n with non-zero cohomology modules $H^k_*(\sP) = N$, $H^l_*(\sP) = M$ for $1 \leq k < l \leq n-1$, and with $H^i_*(\sP)= 0 {\text {\ when\  }}i \neq 0, k, l, n$. Then there is an exact sequence
$$ 0 \to \sP_k(N) \to \sP \oplus \sF  \to \sP_l(M) \to 0,$$
where $\sF$ is some free bundle.
\end{lemma}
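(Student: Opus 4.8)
The plan is to deduce the statement from the splitting theorem of Horrocks quoted immediately above, by manufacturing a single comparison morphism between $\sP$ and one of the two syzygy bundles and then reading off the cohomology of the complementary term. Since $k<l$, the bundle with the lower index should be the sub and the one with the higher index the quotient, so I would aim to produce a surjection $\rho\colon \sP\oplus\sF\to\sP_l(M)$ inducing an isomorphism on $H^l_*$ and take its kernel. Realising $\sP_l(M)$ as a quotient, rather than $\sP_k(N)$ as a sub, has the technical advantage that the kernel of a surjection of vector bundles is automatically locally free.

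The crux is the construction of a morphism $\sP\to\sP_l(M)$ that induces the identity of $M=H^l_*(\sP)=H^l_*(\sP_l(M))$. Such a map is not visible from a direct $\operatorname{Hom}$ computation; it has to be built by the Horrocks comparison technique underlying the cited splitting theorem and the monad constructions of \cite{Horrocks} and \cite{Rao}. Concretely, I would resolve $\sP$ on the left by free bundles (sheafifying a minimal free resolution of the module $H^0_*(\sP)$) and compare this complex, stage by stage, against the free complex attached to the minimal free resolution of $M$ that defines $\sP_l(M)$, lifting the identity of $M$. At each stage the obstruction to extending the partial comparison lives in groups assembled from $\operatorname{Hom}$'s between free bundles and from the intermediate cohomology $H^i_*$ ($0<i<n$) of free bundles; the latter vanish, and this vanishing is exactly what permits the lift. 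Since a suitable free bundle occurring in the resolution of $M$ already surjects onto $\sP_l(M)$ and has no intermediate cohomology, I may then enlarge this map to an honest surjection $\rho\colon\sP\oplus\sF\to\sP_l(M)$ by adjoining that free bundle $\sF$, without altering the isomorphism induced on $H^l_*$. I expect this step to be the main obstacle.

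With $\rho$ in hand, set $\sK:=\ker\rho$, a vector bundle, and run the long exact cohomology sequence of $0\to\sK\to\sP\oplus\sF\to\sP_l(M)\to 0$. Using $H^i_*(\sP\oplus\sF)=H^i_*(\sP)$ for $0<i<n$, the hypothesis that the only intermediate cohomology of $\sP$ is $N$ in degree $k$ and $M$ in degree $l$, the vanishing $H^i_*(\sP_l(M))=0$ for $i\neq 0,l,n$, and the fact that $\rho$ is an isomorphism on $H^l_*$, the terms around level $l$ cancel and one is left with $H^i_*(\sK)=0$ for all $0<i<n$ except $H^k_*(\sK)=N$. Hence $\sK$ is a locally free sheaf with isolated cohomology $N$ in degree $k$, and the quoted theorem of \cite{Horrocks} gives $\sK\cong\sP_k(N)\oplus\sF'$ with $\sF'$ a free bundle.

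It remains to normalise the free bundles so that $\sP_k(N)$ and $\sP_l(M)$ sit exactly at the two ends and only the middle term carries an extra free summand. Here one uses the latitude in the choice of $\sF$: the summand $\sF'$ is a subbundle of $\sK$, hence of $\sP\oplus\sF$, so one may pass to the quotient $(\sP\oplus\sF)/\sF'$, obtaining an extension $0\to\sP_k(N)\to(\sP\oplus\sF)/\sF'\to\sP_l(M)\to 0$; a matching of free summands, straightforward when the end terms have vanishing $H^1_*$ (as in the intended range $2\le k<l\le n-2$), then rewrites the middle term as $\sP$ plus a free bundle. The boundary indices $k=1$, $l=n-1$ and the coincidence $l=k+1$ need only a brief separate inspection of the relevant terms of the long exact sequence. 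Apart from this bookkeeping, the entire argument rests on the comparison construction of the second paragraph, and it is precisely the absence of intermediate cohomology in free bundles that makes that construction possible.
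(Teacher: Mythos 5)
Your proposal is the mirror image of the paper's proof --- the paper constructs an inclusion $\sP_k(N)\hookrightarrow\sP\oplus\sF$ and identifies its cokernel via Horrocks' splitting theorem, while you construct a surjection onto $\sP_l(M)$ and identify its kernel --- and your cohomology chase for the kernel $\sK$ is correct. The genuine gap is in the step you yourself call the crux: the construction of $\phi\colon\sP\to\sP_l(M)$ inducing the identity on $H^l_*$. A comparison of resolutions must be anchored by a given map between the modules the two complexes resolve, with projectives on the source side. In the paper's direction the anchor exists: the minimal free resolution of $N$ is augmented by $N$ itself, and $P=H^0_*(\sP)$ admits an exact co-resolution $0\to P\to L_k\to\cdots\to L_1\to A\to N\to 0$ also terminating in $N$; lifting $\mathrm{id}_N$ through these two sequences produces $P_k(N)\to P$. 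That co-resolution of $P$ exists precisely because $k$ is the \emph{lowest} nonvanishing intermediate cohomology of $\sP$. In your direction there is no anchor: $M$ is not the augmentation of a left free resolution of $\sP$ (it is a cohomology module), so ``lifting the identity of $M$'' between left resolutions has nothing to start from --- a chain map between those resolutions would \emph{induce} a map $\sP\to\sP_l(M)$, but to build the chain map you would already need that map. The alternative anchor, an exact co-resolution $0\to P\to G_l\to\cdots\to G_1\to B\to M\to 0$ terminating in $M$, cannot exist: a dimension shift shows its existence forces $H^i_*(\sP)=0$ for all $0<i<l$, contradicting $H^k_*(\sP)=N\neq 0$ with $k<l$. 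For the same reason your claim that the obstructions lie only in the intermediate cohomology of free bundles is wrong: they lie in groups such as $\mathrm{Ext}^1_*(\sP,\sP_{l+1}(M))$, whose vanishing would require exactly the vanishing of the cohomology of $\sP$ below level $l$ that fails here.

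The missing idea that rescues your route is duality. The bundle $\sP^\vee$ has its lowest nonvanishing intermediate cohomology in degree $n-l$, namely the graded dual $M^\vee$ of $M$ up to twist; the paper's comparison therefore applies verbatim to $\sP^\vee$, giving $\sP_{n-l}(M^\vee)\to\sP^\vee$, which becomes a subbundle inclusion after adding a suitable free bundle. Dualizing, and using that the dual of a minimal free resolution of a finite length module is (up to twist) a minimal free resolution of its graded dual read backwards --- so that $\sP_{n-l}(M^\vee)^\vee\cong\sP_l(M)$ up to twist --- yields the surjection $\rho\colon\sP\oplus\sF\to\sP_l(M)$ you need; only then does the rest of your argument run. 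A smaller but real defect is your final normalization: after splitting off $\sF'$ from $\sK$, quotienting gives a bundle $\sW$ with $\sW\oplus\sF'\cong\sP\oplus\sF$ (once the relevant $\mathrm{Ext}^1$ vanishing is checked, which itself needs $l\le n-2$), but Krull--Schmidt identifies $\sW$ with $\sP\oplus(\hbox{free})$ only if no line bundle summand of $\sP$ cancels against $\sF'$; the paper avoids this issue by choosing the auxiliary free bundle minimally at the outset, so that the stray free summand (which in its setup appears in the cokernel) is killed by a surjectivity-on-sections argument rather than by cancellation.
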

\begin{proof} This too follows from \cite{Horrocks}. Letting $P$ denote $H^0_*(\sP)$, form an exact sequence (by partially resolving $P^\vee$)
$$0 \to P \to L_k \to L_{k-1}\to \dots \to L_1 \to A \to N \to 0,$$
where $A$ is not a free module. Compare this with a truncated minimal free resolution of $N$:
$$ 0 \to P_k(N) \to L_k' \to L_{k-1}'\to \dots \to L_1' \to L_0' \to N \to 0.$$
The induced map $P_k(N) \to P$ gives a map $\sP_k(N) \to \sP$ which is an isomorphism at the cohomology level $H^k_*$.
Minimally add a free module $F$ to $P$ to force a surjection $P^\vee \oplus F^\vee \to P_k(N)^\vee$. This gives an inclusion of bundles
$ \sP_k(N) \to \sP \oplus \sF$ whose cokernel is $\sP_l(M)\oplus\sF'$ where $\sF'$ is a free bundle (since it has only $H^l_*$ intermediate cohomology).
We notice that both for $k=1$ and for $k>1$, the map $H^1_*(\sP_k(N)) \to H^1_*(\sP \oplus \sF)$ is an isomorphism, so we get a surjection from $H^0_*(\sP \oplus \sF)$ to $H^0_*(\sP_l(M)\oplus\sF')$. By the minimality of $F$ we may conclude that $\sF'=0$
\end{proof}

Summarizing this below, we get:

\begin{proposition}\label{type 1} Let $\sE$ be a rank two bundle on \pee n, $ n\geq 6$ with isolated cohomology of type $(n,k)$ with $H^k_*(\sE)=N$, for some $k$ strictly between $1$ and $\frac n2$.  Then
 $\sE$ has the monad
$$ 0 \to \sA \xrightarrow{\alpha} \sP \xrightarrow{\beta} \sB \to 0, $$
where \begin{itemize}
\item $\sP$ satisfies an exact sequence $ 0 \to \sP_k(N) \to \sP \oplus \sF  \to \sP_{n-k}(M) \to 0,$ where $\sF$ is some free bundle, $M = H^{n-k}_*(\sE)$ (which can be identified with $N^\vee$ up to twist).
\item $H^i_*(\wedge^2 \sP) =0$ for $i \neq 1, k, n-k, n-1$.
\item $H^1_*(\wedge^2 \sP)$ and $ H^{n-1}_*(\wedge^2\sP)$ are non-zero if $k\neq 2$
\end{itemize}
\end{proposition}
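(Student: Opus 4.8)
The plan is to assemble the three bulleted claims from the results already established in the excerpt, treating Proposition \ref{type 1} essentially as a summary. First I would set up the monad $0 \to \sA \xrightarrow{\alpha} \sP \xrightarrow{\beta} \sB \to 0$ from Barth's construction recalled at the start of Section 1, where $\sP$ has vanishing $H^1_*$ and $H^{n-1}_*$ and where $\sA,\sB$ are free. The key observation driving everything is that, for a bundle with isolated cohomology of type $(n,k)$, the two intermediate sequences in the excerpt show $H^i_*(\sE) = H^i_*(\sP)$ for $2 \leq i \leq n-2$. Hence $\sP$ is a bundle whose only nonzero intermediate cohomology modules sit in degrees $k$ and $n-k$, namely $H^k_*(\sP) = N$ and $H^{n-k}_*(\sP) = M$, with $H^i_*(\sP)=0$ for $i \neq 0, k, n-k, n$.

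For the first bullet I would apply the preceding lemma verbatim, with the roles $k \mapsto k$ and $l \mapsto n-k$ (this is legitimate since $1 < k < \tfrac n2$ forces $k < n-k \leq n-1$), to obtain the exact sequence $0 \to \sP_k(N) \to \sP \oplus \sF \to \sP_{n-k}(M) \to 0$ for a suitable free bundle $\sF$. The identification of $M$ with $N^\vee$ up to twist is just Serre duality applied to $\sE$: since $\sE$ is a rank two bundle, $\sE^\vee \cong \sE(-c_1)$, so $H^{n-k}_*(\sE) \cong H^k_*(\sE)^\vee$ up to the appropriate shift, and the same holds at the level of $\sP$ because the intermediate cohomology of $\sP$ agrees with that of $\sE$. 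The second bullet is then immediate from the Remark following the definition of isolated cohomology, which records exactly that $H^i_*(\wedge^2 \sP) = 0$ for $i \neq 1, k, n-k, n-1$; that Remark is itself a direct consequence of the second part of Lemma \ref{H1(wedge2) is non-zero}, applied to each index $l$ with $2 \leq l \leq n-2$ and $l \neq k, n-k$.

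For the third bullet, the hypothesis $k \neq 2$ is what lets me invoke the first part of Lemma \ref{H1(wedge2) is non-zero}: since $H^2_*(\sE) = 0$ precisely when $k \neq 2$ (as $2 \neq k$ and $2 \neq n-k$ under the type $(n,k)$ hypothesis together with $n \geq 6$), that lemma yields directly that $H^1_*(\wedge^2 \sP)$ and $H^{n-1}_*(\wedge^2 \sP)$ are nonzero. I would double-check the edge condition that $2 \neq n-k$ under $k \neq 2$, $n \geq 6$, $k < \tfrac n2$: here $n - k > \tfrac n2 \geq 3$, so indeed $n - k \geq 3 > 2$, confirming $H^2_*(\sE) = 0$.

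The main obstacle, such as it is, is bookkeeping rather than genuine difficulty: I must verify carefully that the hypotheses of the preceding lemma are met, in particular that $\sP$ has \emph{exactly} two nonzero intermediate modules (not one), so that the type $(n,k)$ with $k < \tfrac n2$ assumption really does give $k \neq n-k$ and both $N, M$ nonzero. The genuinely substantive content of this proposition lives in the lemmas it cites rather than in the proof itself, so the task here is to confirm the index ranges and the Serre-duality identification line up correctly, and to ensure the free bundle $\sF$ produced by the lemma is the same object appearing in the statement.
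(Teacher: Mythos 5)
Your proposal is correct and matches the paper's approach exactly: the paper presents this proposition as a summary (``Summarizing this below, we get:'') of Barth's monad construction, the immediately preceding lemma applied with $l = n-k$, and Lemma \ref{H1(wedge2) is non-zero} (via the Remark), which is precisely how you assemble it. Your bookkeeping checks (that $1 < k < \tfrac n2$ puts both $k$ and $n-k$ strictly between $1$ and $n-1$ with $N, M \neq 0$, and that $k \neq 2$ forces $H^2_*(\sE) = 0$) are the right verifications and are all sound.
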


\vskip 5pt
\vskip 10pt

In the case left out in the above proposition, where $\sE$ has isolated middle cohomology with $n=2k$ and with $H^k_*(\sE)=N\neq 0$ equal to the only  non-zero cohomology module in the range $1<i<n-1$,  the monad for $\sE$ has the form
$$ 0\to \sA \to \sP_k(N) \to \sB \to 0.$$
Also there is a short exact sequence
$$0 \to A\otimes N \to H^k_*(\wedge^2 \sP_k(N)) \to B \otimes N \to 0.$$

Thus
\begin{proposition}\label{type 2} Let $\sE$ be a rank two bundle on \pee n, $n=2k, n\geq 6$, with $H^k_*(\sE)=N$, $H^i_*(\sE)=0, i \neq 1,k,n$. Let $\sP_k$ be the $k^{th}$ syzygy bundle of $N$ where $\sP_k$ is the sheafification of $P_k$ with $P_k = \text {Image of\ } (f_{k+1}: L_{k+1} \to L_k)$ in a minimal free resolution of $N$. Then
 $\sE$ has the monad
$$ 0 \to \sA \xrightarrow{\alpha} \sP_k \xrightarrow{\beta} \sB \to 0, $$
where $\sA, \sB$ are sheafifications of free summands
$A,B$ of $L_{k+1}$ and $L_k$ respectively and where $\alpha, \beta$ are induced by $f_{k+1}$. Furthermore
\begin{itemize}
\item $H^i_*(\wedge^2 \sP_k) =0$ for $i \neq 1, k, n-1$.
\item The induced sequence $0 \to A\otimes N \to H^k_*(\wedge^2 \sP_k) \to B \otimes N \to 0$ is exact
\item $H^1_*(\wedge^2 \sP_k)$ and $ H^{n-1}_*(\wedge^2\sP_k)$ are non-zero.
\end{itemize}
\end{proposition}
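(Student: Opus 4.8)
The plan is to read off the statement from Barth's minimal monad of Section~1 together with the two preceding lemmas, so that the only genuinely new content is the identification of $\sP$, $\sA$, $\sB$ from the minimal free resolution of $N$. I would start from Barth's monad $0 \to \sA \xrightarrow{\alpha}\sP \xrightarrow{\beta}\sB \to 0$. Since $H^i_*(\sP)=H^i_*(\sE)$ for $2\leq i\leq n-2$ while $H^1_*(\sP)=H^{n-1}_*(\sP)=0$, and since in the isolated middle cohomology case ($n=2k$, so $n-k=k$) the only non-zero intermediate module of $\sE$ is $H^k_*(\sE)=N$, the bundle $\sP$ satisfies $H^k_*(\sP)=N$ and $H^i_*(\sP)=0$ for $i\neq 0,k,n$. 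By the Horrocks characterization quoted above, $\sP\cong \sP_k(N)\oplus \sF$ with $\sF$ free; and since $n\geq 6$ forces $k\geq 3$ and hence $H^2_*(\sE)=0$, the earlier proposition ruling out line bundle summands applies and gives $\sF=0$, i.e. $\sP\cong\sP_k$.

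To identify $\alpha$ and $\beta$ I would use the two resolution sequences $0\to\sP_{k+1}\to\sL_{k+1}\xrightarrow{\bar f}\sP_k\to 0$ and $0\to\sP_k\hookrightarrow\sL_k\to\sP_{k-1}\to 0$, where $\bar f$ is the corestriction of $f_{k+1}$ and all three of $\sP_{k-1},\sP_k,\sP_{k+1}$ are locally free because $N$ has finite length. Since $\sA$ is free, $\alpha$ lifts through $\bar f$ to a map $\sA\to\sL_{k+1}$, and minimality of the monad forces this lift to be a split inclusion; thus $A$ is a free summand of $L_{k+1}$ and $\alpha$ is induced by $f_{k+1}$. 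Dually, using the rank two self-duality $\sE\cong\sE^\vee(c_1)$ (which makes the monad self-dual with $\sA\cong\sB^\vee(c_1)$ and $\sP_k\cong\sP_k^\vee(c_1)$), one lifts $\beta^\vee\colon\sB^\vee\to\sP_k^\vee$ through the surjection $\sL_k^\vee\twoheadrightarrow\sP_k^\vee$ dual to $\sP_k\hookrightarrow\sL_k$, realizing $B$ as a free summand of $L_k$. In compatible decompositions $L_{k+1}=A\oplus A'$, $L_k=B\oplus B'$, the monad relation $\beta\alpha=0$ becomes the vanishing of the $(A,B)$-block of $f_{k+1}$, so both maps appear as blocks of the single differential $f_{k+1}$ and $\sE$ is the homology.

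Once $\sP\cong\sP_k$ is established, the three itemized assertions are routine. For the first, every $i$ with $2\leq i\leq n-2$ and $i\neq k$ has $H^i_*(\sE)=0$, so Lemma~\ref{H1(wedge2) is non-zero} gives $H^i_*(\wedge^2\sP_k)=0$; hence $H^i_*(\wedge^2\sP_k)=0$ for $i\neq 1,k,n-1$. For the exact sequence I would apply Lemma~\ref{cohomology of wedge 2 P} with $t=k$: here $2\leq k\leq n-2$, the isolated middle cohomology gives $N_{k-1}=N_{k+1}=0$, and $n=2k\geq 6$ gives $k\geq 3$ and $k\leq n-3$; the stated injectivity on the left and surjectivity on the right then yield the exact sequence $0\to A\otimes N\to H^k_*(\wedge^2\sP_k)\to B\otimes N\to 0$. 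Finally, $H^2_*(\sE)=0$ lets the first part of Lemma~\ref{H1(wedge2) is non-zero} conclude that $H^1_*(\wedge^2\sP_k)$ and $H^{n-1}_*(\wedge^2\sP_k)$ are non-zero.

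The main obstacle is the second paragraph: realizing $\sA$ and $\sB$ as free summands of $L_{k+1}$ and $L_k$ with both $\alpha$ and $\beta$ induced by the single differential $f_{k+1}$. This is where minimality of the monad and the rank two self-duality must be combined with care, so that the lift of $\alpha$ on the $L_{k+1}$ side and the dual lift of $\beta$ on the $L_k$ side are compatible and the resulting blocks of $f_{k+1}$ genuinely reproduce a monad with homology $\sE$. By comparison, the cohomological items follow immediately from Lemmas~\ref{H1(wedge2) is non-zero} and~\ref{cohomology of wedge 2 P}.
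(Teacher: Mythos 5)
Your setup is fine: identifying $\sP\cong\sP_k(N)$ via Horrocks' characterization plus the no-line-bundle-summands proposition, and deducing the three itemized statements from Lemma \ref{H1(wedge2) is non-zero} and Lemma \ref{cohomology of wedge 2 P} (with $t=k$, using $k\geq 3$, $k\leq n-3$, $N_{k\pm 1}=0$), is exactly how the paper proceeds; indeed the paper's proof explicitly says those items need no further verification. The gap is at the one place the paper actually has to work: showing that the lifts $\tilde\alpha\colon A\to L_{k+1}$ and $\tilde\beta^\vee\colon B^\vee\to L_k^\vee$ are split inclusions. You assert this follows from ``minimality of the monad,'' but minimality gives nothing here. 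Minimality of the monad amounts to the statements that $B\to H^1_*(\sE)$ and $A^\vee\to H^1_*(\sE^\vee)$ are minimal surjections, equivalently that the composite $P_k\to B$ (and its dual counterpart) has image inside $\mathfrak{m}B$; since the resolution of $N$ is minimal, $P_k\subseteq \mathfrak{m}L_k$, so this condition is automatically satisfied by \emph{any} extension $\tilde\beta\colon L_k\to B$, split or not. In particular a lift with all entries of positive degree is perfectly compatible with monad minimality (and also with $\alpha$ being a subbundle map, since positive-degree entries do not vanish identically on $\pee n$), so no formal argument of this kind can force the splitting. The rank-two self-duality you invoke is likewise not needed and not what does the work: the surjection $\sL_k^\vee\twoheadrightarrow\sP_k^\vee$ already exists by dualizing $0\to\sP_k\to\sL_k\to\sP_{k-1}\to 0$.

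What the paper does instead is to use the exact sequence $0\to A\otimes N\to H^k_*(\wedge^2\sP_k)\to B\otimes N\to 0$ (your second bullet) as the \emph{tool} for proving the splitting, not merely as a consequence. The surjection onto $B\otimes N$ factors through $L_k\otimes N\xrightarrow{\tilde\beta\otimes I}B\otimes N$, so $\tilde\beta\otimes I$ is onto; tensoring with $k=S/\mathfrak{m}$ shows $\tilde\beta\otimes k$ has rank equal to $\operatorname{rank}B$, whence $B$ splits off $L_k$. Dually, the injection $A\otimes N\to H^k_*(\wedge^2\sP_k)$ factors through $\tilde\alpha\otimes I\colon A\otimes N\to L_{k+1}\otimes N$, so $\tilde\alpha\otimes I$ is injective; restricting to $A\otimes\langle n\rangle$ for a socle element $n\in N$ (where, because the tensor product is over $S$ and $\mathfrak{m}n=0$, only the constant part of $\tilde\alpha$ survives) shows $\tilde\alpha\otimes k$ has rank equal to $\operatorname{rank}A$, whence $A$ splits off $L_{k+1}$. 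You correctly sensed in your final paragraph that this identification is the main obstacle, but the proposal leaves it unproved, and the route you sketch for it (minimality plus self-duality) would not close it.
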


\begin{proof} The only item to verify is that $\sA, \sB$ are sheafifications of free summands
$A,B$ of $L_{k+1}$ and $L_k$ respectively and that $\alpha, \beta$ are induced by $f_{k+1}$.  Since $L_{k+1} \to P_k$ is surjective, $\alpha: A \to P_k$ factors through $\tilde\alpha : A \to L_{k+1}$. Likewise, since $L_k^\vee \to P_k^\vee$ is surjective, $\beta^\vee: B^\vee \to P_k^\vee$ factors through $\tilde \beta^\vee: B^\vee \to L_k^\vee$. It remains to show that the matrices $\tilde\alpha, \tilde \beta$ have full rank when tensored by $k$.

The map $H^k_*(\wedge^2 \sP_k) \to B \otimes N \to 0$ in the short sequence above  is obtained from $\wedge^2 \sP_k \to \sB \otimes \sP_k$ where  $p\wedge q$ maps to $\beta(p)\otimes q - \beta(q)\otimes p$.  This factors through $\sL_k \otimes \sP_k$ via the lift $\tilde \beta$. In particular, the map $L_k\otimes N \to B\otimes N$, given by $\tilde\beta \otimes I$, is onto. Hence so is $(\tilde\beta \otimes k ) \otimes I$, a map of vector spaces. Hence the matrix $\tilde\beta \otimes k$ has rank equal to the rank of $B$. So $B$ is a direct summand of $L_k$.

The map $0 \to A\otimes N \to H^k_*(\wedge^2 \sP_k) $ is obtained from $H^k_* (\sA \otimes \sG ) \cong H^k_*(\wedge^2\sG) \hookrightarrow H^k_*(\wedge^2\sP_k)$, which in turn is obtained from $\sA\otimes \sG \to \wedge^2\sG \hookrightarrow \wedge^2\sP_k$, where $a\otimes g$ maps to $\alpha (a)\wedge g$ in $\wedge^2\sP_k$. This map $\sA\otimes \sG \to \wedge^2\sP_k$ factors through $\sL_{k+1}\otimes \sG$, vial the lift $\tilde \alpha$.

It follows that the injection $A\otimes N \to H^k_*(\wedge^2\sP_k)$ factors through $A\otimes N \to L_{k+1}\otimes N$, by the map $\tilde\alpha \otimes I$. This must also be injective. Choose a socle element $n$ in $N$ (an element that is annihilated by all linear forms in $S$). The submodule generated by $n$, $\langle n \rangle$, is a one-dimensional vector space and $A\otimes \langle n\rangle$ is mapped injectively by $\tilde\alpha \otimes I$ to $L_{k+1}\otimes N$. Since the image of $\tilde\alpha \otimes I$ on $A\otimes \langle n\rangle$ is the same as the image of $(\tilde\alpha \otimes k) \otimes I$ on $(A\otimes k)\otimes \langle n\rangle$, it follows that the rank of the matrix  $\tilde\alpha \otimes k$ has rank equal to the rank of $A$. Thus $A$ is a direct summand of $L_{k+1}$.

\end{proof}

We now review a result of Jyotilingam \cite{J} about cohomology modules of tensor products, applying it to the special case of syzygy bundles for our purposes. In the theorem below, $N$ and $M$ will be graded finite length $S$-modules where $S = k[X_0, X_1, \dots, X_n]$ corresponding to $\pee n$. $\sP_k(N)$ and $\sQ_l(M)$ will indicate syzygy bundles obtained from minimal free resolutions of $N$ and $M$.  Note that in the minimal free resolution
$$ 0 \to L_{n+1} \to L_n \to \dots \to L_1 \to L_0 \to N \to 0, $$
when we tensor by $M$, the map $L_{n+1}\otimes M \to L_n \otimes M$, cannot be injective since $M$ has finite length, hence $Tor_{n+1}^S(N,M) \neq 0$, and by Lichtenbaum's theorem \cite{L}  $Tor_{i}^S(N,M) \neq 0$ for all $i \leq n+1$.

\begin{theorem}\label{Jyoti}  Let  $N$ be a finite $S$-module and let $\sP_k$ be its $k^{th}$ syzygy bundle on $\pee n$, with $k\geq 1$. Let $\sQ$ be a bundle on $\pee n$ with $H^l_*(\sQ)=M \neq 0$, with $k \leq l \leq n-2$, and with $H^i_*(\sQ) =0$ for $i=l-1, l-2, \dots, l-k+2$. Then $H^{l+1}_*(\sP_k \otimes \sQ) \neq 0$.
\end{theorem}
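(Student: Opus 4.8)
The plan is to compute $H^{l+1}_*(\sP_k\otimes\sQ)$ through the hypercohomology of the sheafified minimal free resolution of $N$ tensored with $\sQ$, and to read off a nonzero $Tor$ module that survives to $E_\infty$. First I would form the truncated complex of free sheaves
$$\sD^\bullet:\quad 0\to \sL_k\otimes\sQ \to \sL_{k-1}\otimes\sQ\to\cdots\to\sL_0\otimes\sQ\to 0,$$
placed in cohomological degrees $0,\dots,k$, with differentials induced by the maps $f_j$ of the minimal resolution of $N$. Since $\sQ$ is locally free, tensoring preserves exactness, so the only nonzero cohomology sheaf of $\sD^\bullet$ is $\mathcal H^0(\sD^\bullet)=\ker(\sL_k\otimes\sQ\to\sL_{k-1}\otimes\sQ)=\sP_k\otimes\sQ$: the bottom homology $\widetilde N\otimes\sQ$ vanishes because $N$ has finite length, and the interior homologies vanish by exactness of the resolution. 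Thus $\sD^\bullet$ is a resolution of $\sP_k\otimes\sQ$, the first hypercohomology spectral sequence collapses, and $\mathbb H^m_*(\sD^\bullet)\cong H^m_*(\sP_k\otimes\sQ)$. I would then run the second hypercohomology spectral sequence $E_1^{p,q}=H^q_*(\sD^p)\Rightarrow H^{p+q}_*(\sP_k\otimes\sQ)$, observing that $E_1^{p,q}=H^q_*(\sL_{k-p}\otimes\sQ)$ is a direct sum of twists of $H^q_*(\sQ)$.

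The heart of the argument is the row $q=l$. There $E_1^{p,l}=H^l_*(\sL_{k-p}\otimes\sQ)=L_{k-p}\otimes_S M$, and the horizontal differential is $f_{k-p}\otimes \mathrm{id}_M$; hence this row is exactly the truncated complex $L_\bullet\otimes_S M$ that computes $Tor^S_\bullet(N,M)$. In particular $E_2^{1,l}=Tor^S_{k-1}(N,M)$, which is nonzero: since both $N$ and $M=H^l_*(\sQ)$ have finite length, $Tor^S_{n+1}(N,M)\neq 0$, and by Lichtenbaum's rigidity (as recalled in the remark preceding the statement) $Tor^S_i(N,M)\neq 0$ for every $i\le n+1$, in particular for $i=k-1$.

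It then remains to show this class is a permanent cycle. No differential can enter $E_r^{1,l}$, since its source would lie in column $1-r<0$. The outgoing differential $d_r\colon E_r^{1,l}\to E_r^{1+r,\,l-r+1}$ lands, for $2\le r\le k-1$, in a row $q=l-r+1\in\{l-k+2,\dots,l-1\}$, where the hypothesis $H^q_*(\sQ)=0$ forces $E_1^{p,q}=0$ and hence $E_r^{1+r,l-r+1}=0$; for $r\ge k$ the target column $1+r$ exceeds $k$ and is automatically zero. This is precisely where the hypotheses are used: $k\le l$ guarantees that the rows $l-k+2,\dots,l-1$ are genuine intermediate rows (indices $\ge 2$), so the prescribed vanishing applies verbatim, and $l\le n-2$ keeps $l+1$ an intermediate index. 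Consequently $E_\infty^{1,l}=E_2^{1,l}=Tor^S_{k-1}(N,M)\neq 0$ is a subquotient of $H^{l+1}_*(\sP_k\otimes\sQ)$, which gives the claim.

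I expect the main obstacle to be the verification of the two structural identifications that make the spectral sequence computable, namely that $\sD^\bullet$ genuinely resolves $\sP_k\otimes\sQ$ (so that the abutment is the sought cohomology) and that the $q=l$ row is the Tor complex with differential $f\otimes\mathrm{id}_M$; once these are in place, the differential bookkeeping matching the target rows $\{l-k+2,\dots,l-1\}$ to the vanishing hypothesis is routine. A fully elementary alternative would replace the spectral sequence by iterated connecting homomorphisms along the short exact sequences splitting $\sD^\bullet$, tracking a socle- or $Tor$-class from the bottom row upward; the vanishing hypotheses play the identical role there, ensuring each connecting map in the relevant degree is injective.
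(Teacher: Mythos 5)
Your proof is correct, and it is essentially the paper's own argument repackaged in spectral-sequence form: the paper tensors the same truncated sheafified resolution with $\sQ$, splits it into the short exact sequences $0 \to \sP_i\otimes\sQ \to \sL_i\otimes\sQ \to \sP_{i-1}\otimes\sQ \to 0$, and runs the diagram chase in which your vanishing rows $q=l-1,\dots,l-k+2$ appear as the chain $H^{l-1}_*(\sP_{k-3}\otimes\sQ)=\dots=H^{l-k+2}_*(\sL_0\otimes\sQ)=0$, and your permanent cycle $E_\infty^{1,l}=Tor_{k-1}(N,M)$ appears as the surjection $\im \mu \to Tor_{k-1}(N,M)$ from the image of the connecting map $\mu\colon H^l_*(\sP_{k-1}\otimes\sQ)\to H^{l+1}_*(\sP_k\otimes\sQ)$. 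Both arguments close with the identical appeal to Lichtenbaum's theorem, so the difference is bookkeeping rather than mathematical content (indeed, the ``fully elementary alternative'' you sketch in your last paragraph is precisely the proof given in the paper).
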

\begin{proof} The cases $k=1$ and $k=2$ require no conditions on $H^{l-1}_*(\sQ)$. When $k=1$, we get the sequence $H^l_*(\sL_1\otimes \sQ) \to H^l_*(\sL_0\otimes \sQ) \to H^{l+1}_*(\sP_1\otimes \sQ) \to 0$
and the map $L_1\otimes M \to L_0\otimes M$ can never be surjective. When $k>1$, consider the diagram obtained from
the sequences $0 \to \sP_i \otimes \sQ \to \sL_i\otimes \sQ \to \sP_{i-1} \otimes \sQ\to 0, \  i=k, k-1, k-2$ (with $\sP_j=0$ if $j<0$ and $\sP_0=\sL_0$):
$$ \begin{CD} &L_{k} \otimes M &= 	& L_{k} \otimes M & &H^{l-1}_*(\sP_{k-3}\otimes \sQ) &\\
				&	\downarrow       &       &     \downarrow{\gamma}  &      &       \downarrow \\
				&H^{l}_*(\sP_{k-1} \otimes \sQ) &\xrightarrow{\alpha} 	&L_{k-1} \otimes M &\xrightarrow{\beta} &H^{l}_*(\sP_{k-2}\otimes \sQ) &\\
				&	\downarrow{\mu}       &       &     \downarrow{\delta}   &      &       \downarrow \\
                        	&H^{l+1}_*(\sP_k \otimes \sQ)       &    & L_{k-2} \otimes M   &=  &L_{k-2} \otimes M & \\
                          \end{CD}$$
The vanishing conditions on $H^i_*(\sQ)$ show that $H^{l-1}_*(\sP_{k-3}\otimes \sQ) = H^{l-2}_*(\sP_{k-4}\otimes \sQ) = \dots = H^{l-k+2}_*(\sL_{0}\otimes \sQ) =0$. So $\ker \delta = \im \alpha$ and the diagram induces a surjection $\im \mu \to Tor_{k-1}(N,M)$. By Lichtenbaum's theorem,  $H^{l+1}_*(\sP_k \otimes \sQ) \neq 0$.
\end{proof}
 \vskip .5cm

 \section{Isolated cohomology of type $(n,k)$, with $n\geq 4k$}

In this section, we will prove that there are no indecomposable rank two bundles on \pee n with isolated cohomology of type $(n,k)$, where $n \geq 4k$. We study the sequence $ 0 \to \sP_k(N) \to \sP \oplus \sF  \to \sP_{n-k}(M) \to 0$ of Proposition \ref{type 1}. We will need to pay special attention to the case where $N$ is a cyclic module. Hence the following lemma.

\begin{lemma} \label{cyclic modules}  Let $N$ be a graded cyclic S-module. For the corresponding syzygy bundle $\sP_2(N)$ on \pee n,  $H^3_*(S^2\sP_2(N)) =0$ and $H^3_*(\wedge^2\sP_2(N)) \neq 0$.
\end{lemma}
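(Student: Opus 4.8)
The plan is to analyze the minimal free resolution of a cyclic module $N$ and track how the symmetric and exterior squares of the syzygy bundle $\sP_2(N)$ inherit cohomology from it. Since $N$ is cyclic, we may write $N = S/I$ for some ideal $I$, and its minimal free resolution begins $\cdots \to L_2 \xrightarrow{f_2} L_1 \xrightarrow{f_1} L_0 = S \to N \to 0$, where $\sP_2(N)$ is the sheafification of $P_2 = \im f_3 = \ker f_2$. The key cohomological facts about syzygy bundles are $H^2_*(\sP_2(N)) = N$ and $H^i_*(\sP_2(N)) = 0$ for $i \neq 0, 2, n$. The plan is to build the exact sequences computing $H^3_*$ of both $S^2 \sP_2(N)$ and $\wedge^2 \sP_2(N)$ and compare them.

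First I would set up the two Koszul-type resolutions analogous to those in equation (\ref{wedgetwos}), but for the single syzygy bundle $\sP = \sP_2(N)$ rather than the full monad. Writing the defining sequence $0 \to \sP \to \sL_2 \to \sL_1 \to \sL_0 \to 0$ (the sheafified resolution with $\sL_0 = \sO$), I would take symmetric and exterior squares. For $S^2\sP$ and $\wedge^2\sP$ there are long exact sequences built from the bundles $\sP, \sL_2, \sL_1, \sL_0$; concretely, tensoring and symmetrizing/antisymmetrizing gives complexes whose cohomology in degree $3$ is governed by the Tor modules $\mathrm{Tor}^S_i(N,N)$ together with their decomposition into symmetric and antisymmetric parts. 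The crucial input is that for $S^2$ versus $\wedge^2$ these Tor contributions split: the diagonal multiplication map on $N \otimes N$ (which feeds into $S^2 N$) behaves differently from the antisymmetric part.

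The heart of the argument, which I expect to be the main obstacle, is showing the \emph{vanishing} $H^3_*(S^2\sP_2(N)) = 0$ while simultaneously getting \emph{non-vanishing} $H^3_*(\wedge^2\sP_2(N)) \neq 0$. The non-vanishing I would try to obtain via a Lichtenbaum/Jyotilingam-style argument as in Theorem \ref{Jyoti}: the relevant $\mathrm{Tor}$ group contributing to $H^3_*(\wedge^2\sP_2(N))$ is nonzero because $N$ has finite length, so that some map $L_i \otimes N \to L_{i-1}\otimes N$ fails to be injective, forcing a nonzero cohomology class. The vanishing for $S^2$ is more delicate and is where the cyclicity of $N$ is essential: because $N = S/I$ is generated by a single element, the symmetric square $S^2 N$ is itself cyclic and the multiplication map $S^2 P_2 \to$ (relevant free term) is controlled, so the would-be obstruction in the symmetric complex is killed. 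The main technical point I would need to nail down is identifying exactly which $\mathrm{Tor}$ or multiplication map controls $H^3_*$ in each case and verifying that the symmetric part degenerates precisely when $N$ is cyclic, whereas the antisymmetric part survives. I would carry this out by writing down the relevant piece of the long exact sequence explicitly and chasing the socle or the lowest-degree generator through it, since cyclicity pins down the module structure enough to force the symmetric contribution to zero.
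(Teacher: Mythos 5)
There is a genuine gap, and it sits exactly at the two assertions the lemma is about. Your setup (the resolution $0 \to \sP_2 \to \sL_2 \to \sL_1 \to \sL_0 \to 0$, the symmetric/exterior square sequences, Tor and Lichtenbaum for non-vanishing) is the right skeleton and matches the paper's in spirit, but both key steps are deferred rather than proved. For the vanishing $H^3_*(S^2\sP_2(N))=0$, your proposed mechanism --- ``$S^2N$ is itself cyclic and the multiplication map is controlled, so the obstruction is killed'' --- is not an argument, and it is not how cyclicity actually enters. In the paper, the four-term sequence coming from $0 \to \sP_2 \to \sL_2 \to \sP_1 \to 0$ reduces the vanishing to the surjectivity of $H^1_*(\sL_2 \otimes \sP_1) \to H^1_*(\wedge^2 \sP_1)$, and that surjectivity is proved by a commutative diagram whose bottom row is $0 \to \wedge^2\sP_1 \to \wedge^2\sL_1 \to \sP_1 \to 0$; this row exists precisely because cyclicity lets one take $\sL_0 \cong \sO$ of rank one (so $\sL_1\otimes\sL_0 \to S^2\sL_0$ collapses), and the diagram chase closes because $L_2$ surjects onto $H^0_*(\sP_1)$ by exactness of the resolution. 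So cyclicity is used as ``$\sL_0$ has rank one,'' not as ``$S^2N$ is cyclic.''

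The non-vanishing half of your plan has a circularity problem. Lichtenbaum/Jothilingam-type arguments give non-vanishing of $\mathrm{Tor}$, which controls the \emph{full} tensor square: for cyclic $N=S/I$ the map $L_1\otimes N \to L_0\otimes N$ is zero (its entries lie in $I$, which annihilates $N$), so $H^3_*(\sP_2\otimes\sP_2) = \coker(L_2\otimes N \to L_1\otimes N) \neq 0$ by minimality of the resolution. But $\sP_2\otimes\sP_2 = S^2\sP_2 \oplus \wedge^2\sP_2$, and the Tor computation by itself does not tell you which summand carries the non-vanishing. Your proposal to split $\mathrm{Tor}_1(N,N)$ into symmetric and antisymmetric parts and argue that ``the antisymmetric part survives'' is a restatement of the lemma, not a proof of it: how $S^2$ and $\wedge^2$ of the syzygies match up with the eigenspaces of the swap involution alternates with homological degree (this parity issue is exactly why Case 3 of the paper's final theorem treats odd $k$ separately), and in the present situation one eigenspace is everything while the other is zero --- deciding which is the content of the lemma. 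The paper avoids analyzing the involution altogether by ordering the steps: first prove $H^3_*(S^2\sP_2)=0$ by the diagram argument above, then prove $H^3_*(\sP_2\otimes\sP_2)\neq 0$ as sketched, and only then conclude $H^3_*(\wedge^2\sP_2) \neq 0$ from the direct sum decomposition. Since your vanishing step is the unproven one, your non-vanishing conclusion hangs on it as well; as written, neither half of the lemma is established.
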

\begin{proof} From the sequence  $0 \to \sP_2 \to \sL_2 \to \sP_1 \to 0$ obtained from a minimal resolution of $N$, it suffices to show that  the map
$H^1_*(\sL_2 \otimes \sP_1) \to H^1_*(\wedge^2 \sP_1)$
 is surjective to prove that $H^3_*(S^2\sP_2(N)) =0$. This map can be studied using the natural commuting diagram
				
$$ \begin{CD} 0 \to 	&\sL_2 \otimes \sP_1 &\to 	&\sL_2 \otimes \sL_1 &\to &\sL_2\otimes \sL_0 &\to 0\\
				&	\downarrow       &       &     \downarrow   &      &       \downarrow \\
                        0 \to 	&\wedge^2 \sP_1        &\to    & \wedge^2 \sL_1   &\to  &\sL_1 \otimes \sL_0 &\to S^2 \sL_0 \\
                          \end{CD}$$

It simplifies when $\sL_0$ has rank one, where without loss of generality, we can take $\sL_0$ to be \OP n , yielding

 $$ \begin{CD} 0 \to 	&\sL_2 \otimes \sP_1 &\to 	&\sL_2 \otimes \sL_1 &\to &\sL_2 & \to 0\\
				&	\downarrow       &       &     \downarrow   &      &       \downarrow \\
                        0 \to 	&\wedge^2 \sP_1        &\to    & \wedge^2 \sL_1   &\to  &\sP_1 &\to 0 \\
                          \end{CD}.$$

  Since $\sL_2$ surjects onto the global sections of $\sP_1$, it follows from the diagram of long exact sequences  of cohomology modules  that $H^1_*(\sL_2 \otimes \sP_1) \to H^1_*(\wedge^2 \sP_1)$ is onto.

  For the second part, we will show that $H^3_*(\sP_2\otimes \sP_2) \neq 0$. (this argument will be repeated later in a slightly different setting.) With $H^3_*(S^2\sP_2) =0$, since $H^3_*(\sP_2\otimes \sP_2)  = H^3_*(S^2\sP_2) \oplus H^3_*(\wedge^2\sP_2)$, the conclusion of the lemma  follows.

  Consider  $ 0 \to \sP_2\otimes \sP_2 \to \sL_2 \otimes \sP_2 \to \sL_1 \otimes \sP_2 \to \sL_0 \otimes \sP_2 \to 0.$
  From $0 \to \sP_1 \otimes \sP_2 \to \sL_1 \otimes \sP_2 \to \sL_0 \otimes \sP_2 \to 0$, we get
  $$H^2_*(\sP_1\otimes \sP_2) = \ker (L_1\otimes N \to L_0 \otimes N) = L_1\otimes N$$
  since $N$ is cyclic. Hence we get
  $$H^3_*(\sP_2\otimes \sP_2) = \coker (L_2\otimes N \to L_1 \otimes N)$$
  which is clearly non-zero.

\end{proof}

\begin{proposition}{\label{Split case}}Suppose $\sE$ on \pee n is a rank two bundle of type $(n,k)$ with $n\geq 7$, $k$ strictly less than $\frac n2$. Then the sequence $ 0 \to \sP_k(N) \to \sP \oplus \sF  \to \sP_l(M) \to 0,$  in Proposition \ref{type 1} is not-split.
\end{proposition}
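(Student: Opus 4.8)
The plan is to argue by contradiction: suppose the sequence
$$ 0 \to \sP_k(N) \to \sP \oplus \sF \to \sP_{l}(M) \to 0 $$
(with $l = n-k$) were split. Then $\sP \oplus \sF \cong \sP_k(N) \oplus \sP_{n-k}(M)$, so I would first compute $\wedge^2(\sP\oplus\sF)$ using the standard decomposition
$$ \wedge^2(\sP_k(N) \oplus \sP_{n-k}(M)) \cong \wedge^2\sP_k(N) \,\oplus\, \bigl(\sP_k(N)\otimes\sP_{n-k}(M)\bigr) \,\oplus\, \wedge^2\sP_{n-k}(M), $$
and similarly expand $\wedge^2(\sP\oplus\sF) \cong \wedge^2\sP \oplus (\sP\otimes\sF)\oplus \wedge^2\sF$, where the last two summands are free (hence contribute nothing to intermediate cohomology). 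The point of the splitting hypothesis is that it would make the ``cross term'' $\sP_k(N)\otimes\sP_{n-k}(M)$ an honest direct summand of $\wedge^2(\sP\oplus\sF)$, and therefore its intermediate cohomology would inject into $H^\bullet_*(\wedge^2\sP)$.

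Next I would use the vanishing constraint from Proposition \ref{type 1}, namely $H^i_*(\wedge^2\sP)=0$ for $i\neq 1,k,n-k,n-1$. Combined with the (known, from syzygy-bundle theory) vanishing of intermediate cohomology of $\wedge^2\sP_k(N)$ and $\wedge^2\sP_{n-k}(M)$ outside their own small ranges, the splitting would force $H^i_*(\sP_k(N)\otimes\sP_{n-k}(M))$ to vanish for all $i$ in the ``middle'' interval strictly between $k$ and $n-k$. This is exactly where the hypothesis $n\geq 4k$, equivalently $n-k \geq 3k > k$ with a wide gap, should be exploited: there is genuine room between $k$ and $n-k$ for a cohomology group of the tensor product to be forced nonzero, contradicting the required vanishing. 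The natural tool is Theorem \ref{Jyoti} (Jyotilingam's result): taking $\sP_k = \sP_k(N)$ as the syzygy bundle and $\sQ = \sP_{n-k}(M)$, whose only intermediate cohomology in the relevant range is $M = H^{n-k}_*$ at level $l=n-k$, the hypotheses $k\leq l \leq n-2$ and the vanishing of $H^i_*(\sQ)$ for $i=l-1,\dots,l-k+2$ are satisfied precisely because the gap $n-2k \geq 2k \geq k+1$ leaves those intermediate levels empty. The theorem then yields $H^{n-k+1}_*(\sP_k(N)\otimes\sP_{n-k}(M))\neq 0$.

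I would then observe that $n-k+1$ lies strictly between $k$ and $n-1$, and in fact (again using $n\geq 4k$) it is one of the forbidden indices for $H^\bullet_*(\wedge^2\sP)$ — or can be pushed into the forbidden range — so this nonvanishing contradicts the vanishing dictated by Proposition \ref{type 1}. Hence the sequence cannot split. The main obstacle I anticipate is bookkeeping of the index ranges: I must verify carefully that $n-k+1 \notin \{1,k,n-k,n-1\}$ under $n\geq 4k$, and that the cohomological vanishing hypotheses of Theorem \ref{Jyoti} are genuinely met by $\sQ=\sP_{n-k}(M)$ in the window $l-k+2,\dots,l-1$; this is where the arithmetic inequality $n\geq 4k$ (rather than merely $n>2k$) does the essential work, since it guarantees the window of forced vanishing is both nonempty and compatible with the isolated-cohomology hypothesis on $\sE$. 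A secondary technical point is ensuring that passing from the split tensor summand to a direct summand of $\wedge^2\sP$ is legitimate, i.e. that the cross term is not absorbed into the free summands; this follows from the minimality of the syzygy bundles and the fact that $\sP$ has no line bundle summands by the earlier Proposition.
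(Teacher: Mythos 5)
Your proposal reproduces the paper's argument for $k>2$: assume the sequence splits, note that the syzygy bundles have no line-bundle summands so the cross term $\sP_k(N)\otimes\sP_{n-k}(M)$ becomes a direct summand of $\wedge^2\sP$, apply Theorem \ref{Jyoti} with $\sQ=\sP_{n-k}(M)$ and $l=n-k$ to get $H^{n-k+1}_*(\sP_k(N)\otimes\sP_{n-k}(M))\neq 0$, and contradict the vanishing $H^{n-k+1}_*(\wedge^2\sP)=0$ from Proposition \ref{type 1}. For that range of $k$ this is correct, and in fact you need less than you think: only $n>2k$ (so that the window $i=n-2k+2,\dots,n-k-1$ avoids $H^0_*$ and $H^{n-k}_*$ of $\sQ$), not $n\geq 4k$; indeed the proposition is stated under $n\geq 7$, $k<\frac n2$, and never assumes $n\geq 4k$.

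The genuine gap is the case $k=2$, which your index bookkeeping cannot rescue. When $k=2$ we have $n-k+1=n-1$, and $n-1$ is \emph{not} a forbidden index: Proposition \ref{type 1} permits (indeed, in general asserts) $H^{n-1}_*(\wedge^2\sP)\neq 0$. Your claim that $n-k+1$ ``lies strictly between $k$ and $n-1$'' is false precisely for $k=2$, and the hedge that $n\geq 4k$ lets the nonvanishing be ``pushed into the forbidden range'' has no content — for $k=2$, $n\geq 8$ still gives $n-k+1=n-1$. This case cannot be waved away: Theorem \ref{P7} (the $\pee 8$ result) invokes Proposition \ref{Split case} exactly with $k=2$. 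The paper closes this hole by a different mechanism: instead of the cross term, it exhibits nonvanishing cohomology of the summand $\wedge^2\sP_2(N)$ at a genuinely forbidden index — $H^3_*(\wedge^2\sP_2(N))\neq 0$ when $N$ is cyclic (Lemma \ref{cyclic modules}), and $H^4_*(\wedge^2\sP_2(N))\neq 0$ when $N$ is non-cyclic (a direct computation from the resolution) — which contradicts Proposition \ref{type 1} since $3$ and $4$ differ from $1,2,n-2,n-1$ once $n\geq 7$. This is also where the hypothesis $n\geq 7$ (rather than $n\geq 6$) is actually used. To complete your proof you would need to add this $k=2$ analysis, split into the cyclic and non-cyclic subcases.
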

\begin{proof}
Suppose  $\sP \oplus \sF = \sP_k(N) \oplus \sP_{n-k}(M)$. Neither $\sP_k(N)$ nor  $\sP_{n-k}(M)$ has any line bundle summands, hence $\sP = \sP_k(N) \oplus \sP_{n-k}(M)$. So $\wedge^2 \sP$ has summands $\sP_k(N) \otimes \sP_{n-k}(M)$ and $\wedge^2\sP_k(N)$.
If $k>2$ then using Proposition \ref{Jyoti}, $H^{n-k+1}_*(\sP_k(N) \otimes \sP_{n-k}(M))$ is non-zero which contradicts the requirement in Proposition \ref{type 1} that $H^{n-k+1}_*(\wedge^2 \sP) =0$.

 If $k=2$, there are two cases: if $N$ is cyclic, then $H^3_*(\wedge^2\sP_2(N)) \neq 0$ by Lemma \ref{cyclic modules}, which contradicts Proposition \ref{type 1} since $n-k > 3$ when $n\geq 6$.

 If $N$ is non-cyclic, then from the sequences $0 \to \sP_2(N) \to \sL_2 \to \sP_1(N)\to 0$ and $0\to \sP_1 \to \sL_1 \to \sL_0 \to 0$, we get $H^4_*(\wedge^2\sP_2(N)) \neq 0$. This a contradiction to Proposition \ref{type 1} when $n\geq 7$.
\end{proof}

\begin{remark} The case $n=6, k=2$ is not answered above. A weaker argument can be made here that even though $\sP = \sP_k(N) \oplus \sP_{n-k}(M)$, $N$ itself is neither cyclic nor a direct sum of submodules $N_1\oplus N_2$.
\end{remark}

\begin{theorem}\label{P7} Let $\sE$ be a rank two vector bundle on $\pee 8$ with $H^3_*(\sE)=H^4_*(\sE)=0$, then $\sE$ splits.
  \end{theorem}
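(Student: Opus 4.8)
The plan is to assume that $\sE$ is indecomposable and derive a contradiction; since a decomposable rank two bundle is a sum of two line bundles, this shows $\sE$ splits.

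First I would reduce to isolated cohomology of type $(8,2)$. As $H^3_*(\sE)=H^4_*(\sE)=0$, Serre duality ($\sE^\vee\cong\sE(-c_1)$) gives $H^5_*(\sE)=0$ as well. By \cite{M-P-R} an indecomposable rank two bundle on $\pee 8$ must have some intermediate $H^l_*(\sE)\neq 0$ with $1<l<7$, and the only surviving possibility is $H^2_*(\sE)=N\neq 0$, hence also $H^6_*(\sE)=M\neq 0$. Thus $\sE$ has isolated cohomology of type $(8,2)$, and Proposition \ref{type 1} produces a monad $0\to\sA\to\sP\to\sB\to 0$ with $\sP$ fitting in $0\to\sP_2(N)\to\sP\oplus\sF\to\sP_6(M)\to 0$ ($\sF$ free, $M\cong N^\vee$ up to twist), satisfying $H^i_*(\wedge^2\sP)=0$ for $i\neq 1,2,6,7$ and $H^i_*(\sP)=0$ for $i\neq 0,2,6,8$ with $H^2_*(\sP)=N$. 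Writing $\sR=\sP\oplus\sF$, the exterior square carries the standard two-step filtration
\[0\to\wedge^2\sP_2(N)\to F_1\to\sP_2(N)\otimes\sP_6(M)\to 0,\qquad 0\to F_1\to\wedge^2\sR\to\wedge^2\sP_6(M)\to 0,\]
and since $\sF$ is free and $H^{3}_*(\sP)=H^4_*(\sP)=H^5_*(\sP)=0$, I have $H^i_*(\wedge^2\sR)=H^i_*(\wedge^2\sP)=0$ for $i=3,4,5$.

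Next I would record the middle cohomology of the three graded pieces. Feeding a minimal free resolution of $N$ through $\sP_6(M)$ shows $H^i_*(\sP_2(N)\otimes\sP_6(M))=0$ for $i=3,4,5$; the same computation applied to $\sP_2(N)\otimes\sP_2(N)$ gives $H^5_*=H^6_*=0$, whence $H^5_*(\wedge^2\sP_2(N))=H^6_*(\wedge^2\sP_2(N))=0$. Using the Horrocks duality $\sP_6(M)\cong\sP_2(N)^\vee$ (up to twist and free summands) together with Serre duality gives $H^i_*(\wedge^2\sP_6(M))\cong H^{8-i}_*(\wedge^2\sP_2(N))^\vee$ up to twist, so in particular $H^2_*(\wedge^2\sP_6(M))=H^3_*(\wedge^2\sP_6(M))=0$. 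Chasing these against $H^{3,4,5}_*(\wedge^2\sR)=0$ yields two facts. From the degree-four rows, $H^4_*(\wedge^2\sP_2(N))\cong H^4_*(F_1)$ is a quotient of $H^3_*(\wedge^2\sP_6(M))=0$, so $H^4_*(\wedge^2\sP_2(N))=0$. From the degree-three rows, $H^3_*(F_1)$ is a quotient of $H^2_*(\wedge^2\sP_6(M))=0$, so $H^3_*(F_1)=0$.

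I would then split into cases according to whether $N$ is cyclic. If $N$ is not cyclic, the computation carried out in the proof of Proposition \ref{Split case} gives $H^4_*(\wedge^2\sP_2(N))\neq 0$, directly contradicting $H^4_*(\wedge^2\sP_2(N))=0$. If $N$ is cyclic, I would instead contradict $H^3_*(F_1)=0$: by Lemma \ref{cyclic modules} one has $H^3_*(S^2\sP_2(N))=0$, and the map $p\otimes r\mapsto p\wedge r$ gives an exact sequence $0\to S^2\sP_2(N)\to\sP_2(N)\otimes\sR\to F_1\to 0$. Because $\sF$ is free and $H^3_*(\sP_2(N))=0$, we have $H^3_*(\sP_2(N)\otimes\sR)=H^3_*(\sP_2(N)\otimes\sP)$, which is nonzero by Theorem \ref{Jyoti} applied to the syzygy bundle $\sP_2(N)$ and $\sQ=\sP$ with $l=2$ (for $k=2$ no intermediate vanishing is required, and $H^2_*(\sP)=N\neq 0$). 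Since $H^3_*(S^2\sP_2(N))=0$, the map $H^3_*(\sP_2(N)\otimes\sR)\to H^3_*(F_1)$ is injective, forcing $H^3_*(F_1)\neq 0$ — a contradiction. In either case $\sE$ cannot be indecomposable, so it splits.

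I expect the cyclic case to be the main obstacle: there the degree-four argument that settles the non-cyclic case collapses, because $H^4_*(\wedge^2\sP_2(N))$ genuinely vanishes, and the contradiction must be pushed one degree lower to $H^3$. Extracting it relies on combining Jyotilingam's non-vanishing (Theorem \ref{Jyoti}) with the special vanishing $H^3_*(S^2\sP_2(N))=0$ for cyclic modules and on recognizing the sequence $0\to S^2\sP_2(N)\to\sP_2(N)\otimes\sR\to F_1\to 0$ that transports non-vanishing of $H^3_*(\sP_2(N)\otimes\sP)$ into $H^3_*(F_1)$. The other computation-heavy point is verifying the precise middle cohomology of the three filtration pieces, especially the vanishing of $H^{3,4,5}_*(\sP_2(N)\otimes\sP_6(M))$ and the duality identification of $\wedge^2\sP_6(M)$.
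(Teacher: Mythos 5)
Your proof is correct, and its skeleton is the paper's: wedge-square the Horrocks-type extension $0 \to \sP_2(N) \to \sP \oplus \sF \to \sP_{6}(M) \to 0$ of Proposition \ref{type 1}, split into cases according to whether $N$ is cyclic, and play the resulting cohomology against the vanishing $H^{i}_*(\wedge^2 \sP)=0$ for $i=3,4,5$ coming from Lemma \ref{H1(wedge2) is non-zero}; indeed your $F_1$ is exactly the sheaf $\sD$ of the paper's cyclic case, and your cyclic-case contradiction sits at the same group $H^3_*(F_1)$. The genuine differences are in the packaging and in three sub-computations. (i) You use the two-step filtration of $\wedge^2(\sP\oplus\sF)$ with graded pieces $\wedge^2\sP_2(N)$, $\sP_2(N)\otimes\sP_6(M)$, $\wedge^2\sP_6(M)$, while the paper works with the two four-term sequences \eqref{first8} and \eqref{second8}; as a result, in the non-cyclic case you contradict $H^4_*(\wedge^2\sP_2(N))=0$ directly, needing the vanishings $H^{3}_*(\sP_2(N)\otimes\sP_6(M))=0$ and $H^3_*(\wedge^2\sP_6(M))=0$, whereas the paper contradicts $H^3_*$ of the cokernel of $\wedge^2\sP_2(N)\to\wedge^2(\sP\oplus\sF)$, needing instead $H^{3}_*(\sP_{6}(M)\otimes \sP)=0$ and $H^{2}_*(S^2\sP_{6}(M))=0$. (ii) You get $H^{2}_*(\wedge^2\sP_6(M))=H^{3}_*(\wedge^2\sP_6(M))=0$ from Horrocks duality $\sP_2(N)^\vee\cong\sP_6(M)(t)\oplus(\text{free})$ plus Serre duality, reducing everything to $H^{5}_*(\wedge^2\sP_2(N))=H^{6}_*(\wedge^2\sP_2(N))=0$; the paper instead runs two separate resolution chases on $\sP_6(M)$. (Strictly, because of the possible free summand, duality gives that $H^i_*(\wedge^2\sP_6(M))$ is a direct summand of $H^{8-i}_*(\wedge^2\sP_2(N))^\vee$ up to twist rather than isomorphic to it, but that is all you use.) (iii) In the cyclic case you import the non-vanishing $H^3_*(\sP_2(N)\otimes\sP)\neq 0$ from Jothilingam's Theorem \ref{Jyoti} with $k=l=2$, which needs only $H^2_*(\sP)=N\neq 0$, so cyclicity enters your argument in exactly one place, namely $H^3_*(S^2\sP_2(N))=0$ from Lemma \ref{cyclic modules}; the paper instead repeats the explicit computation of that lemma, which exploits cyclicity a second time through $I\cdot N=0$. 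Your substitutions buy some economy and conceptual clarity (one duality argument replaces two chases, and the role of the cyclic hypothesis is isolated), while the paper's version is more self-contained and never needs the tensor product $\sP_2(N)\otimes\sP_6(M)$. One presentational nit: for $H^{5}_*(\sP_2(N)\otimes\sP_2(N))=H^{6}_*(\sP_2(N)\otimes\sP_2(N))=0$ the chase must run through the front end $\sL_2\to\sL_1\to\sL_0$ of the resolution of $N$, staying in intermediate degrees, rather than through the tail as is natural for $\sP_2(N)\otimes\sP_6(M)$; both are routine, but calling them ``the same computation'' glosses over this.
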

  \begin{proof} Let $N=H^2_*(\sE)$ and $M=H^6_*(\sE)$. Both are non-zero unless $\sE$ splits. By Proposition \ref{Split case} (with $k=2$), we know that the sequence \label{Horr-type1b} below is non-split.
\begin{equation} \label{Horr-type1b} 0 \to \sP_2(N) \to \sP \oplus \sF  \to \sP_{6}(M) \to 0. \end{equation}
The proof will analyze the consequences of the  two sequences below obtained from sequence  \ref{Horr-type1b}.
 \begin{equation} \ \label{first8} 0 \to S^2\sP_2(N) \to \sP_2(N) \otimes [\sP \oplus \sF ]  \to \wedge^2\sP \oplus [\sP\otimes \sF] \oplus \wedge^2\sF  \to \wedge^2\sP_{6}(M) \to 0, \end{equation}
\begin{equation} \ \label{second8} 0 \to \wedge^2\sP_2(N) \to \wedge^2\sP \oplus [\sP\otimes \sF] \oplus \wedge^2\sF  \to \sP_{6}(M) \otimes [\sP \oplus \sF ]   \to S^2\sP_{6}(M) \to 0, \end{equation}

  \noindent {\bf Case 1} If $N$ is cyclic we look at the sequence (\ref{first8}).

  It breaks into
  \begin{equation}\label{breakup7} \begin{split} 0 \to S^2\sP_2(N) \to \sP_2(N) \otimes [\sP \oplus \sF ]  \to \sD \to 0, \\
 0 \to \sD \to \wedge^2\sP \oplus [\sP\otimes \sF] \oplus \wedge^2\sF  \to \wedge^2\sP_{6}(M) \to 0
 \end{split}
 \end{equation}
 $H^3_*(\sP_2(N)\otimes [\sP \oplus \sF ]) \neq 0$ by the same argument as in the second part of the proof of Lemma \ref{cyclic modules}, and by the same lemma, $H^3_*(S^2\sP_2(N))=0$.  Hence $H^3_*(\sD) \neq 0$ from the first sequence in (\ref{breakup7}).

 In the second sequence in (\ref{breakup7}), $H^3_*(\sP)=0$. Hence so is
 $H^3_*(\wedge^2\sP)$.  Finally, $\sP_{6}(M)$ fits into a sequence with free bundles
 $$ 0 \to \sL'_{9} \to \sL'_{8} \to \sL'_{7} \to \sP_{6} \to 0.$$
 This yields two exact sequences
 \begin{equation}\label{P_{6}} \begin{split}
 0 \to S^2\sP_{7}  \to S^2\sL'_{7} \to  \sL'_{7}\otimes \sP_{6}  \to \wedge^2\sP_{6}\to 0\\
 0 \to \wedge^2\sL'_{9}  \to \wedge^2\sL'_{8} \to  \sL'_{8}\otimes \sP_{7}  \to S^2\sP_{7}\to 0\\
  \end{split}
 \end{equation}

  From these, we can chase down $H^2_*(\wedge^2\sP_{6})$ to be equal to zero since $H^2_*(\sP_{6})=0, H^4_*(\sP_{7})=0, H^6_*(\wedge^2\sL'_{9})=0$.
  Hence $H^3_*(\sD)$ is both zero and non-zero, a contradiction.

  \noindent {\bf Case 2} If $N$ is non cyclic, we look at the sequence (\ref{second8})
  $$0 \to \wedge^2\sP_2(N)  \to \wedge^2\sP \oplus [\sP\otimes \sF] \oplus \wedge^2\sF \to \sP_{6}(M) \otimes [\sP \oplus \sF ]   \to S^2\sP_{6}(M) \to 0.$$
  It breaks into
  \begin{equation}\label{breakup27} \begin{split} 0 \to \wedge^2\sP_2(N) \to \wedge^2\sP \oplus [\sP\otimes \sF] \oplus \wedge^2\sF  \to \sD \to 0, \\
 0 \to \sD \to \sP_{6}(M) \otimes [\sP \oplus \sF ]   \to S^2\sP_{6}(M) \to 0
 \end{split}
 \end{equation}

  From $$0 \to S^2 \sP_1(N) \to S^2 \sL_1 \to \sL_1 \otimes \sL_0 \to \wedge^2 \sL_0 \to 0,$$
  $$0 \to \wedge^2 \sP_2(N) \to \wedge^2 \sL_2 \to \sL_2 \otimes \sP_1 \to S^2 \sP_1(N) \to 0.$$ we get  $H^2_*(S^2\sP_1(N)) \neq 0$ and $H^4_*(\wedge^2\sP_2(N)) \neq 0$. Since $H^{4}_*(\sP)$ and $H^{4}_*(\wedge^2\sP)$ are zero, we obtain $H^{3}_*(\sD) \neq 0$.

 Again, in the second sequence in (\ref{breakup27}),  $H^{3}_*(\sP_{6}(M)\otimes \sF)=0$  and   $H^{3}_*(\sP_{6}(M)\otimes \sP)$ can be studied using a resolution for $\sP_{6}(M)$ and tensoring with $\sP$.
  $$ 0 \to \sL'_{9}\otimes \sP \to \sL'_{8}\otimes \sP \to  \sL'_{7}\otimes \sP \to \sP_{6}(M)\otimes \sP \to 0.$$

  Then $H^{3}_*(\sP_{6}(M)\otimes \sP) =0$ since $H^{3}_*(\sP), H^{4}_*(\sP) , H^{5}_*(\sP)$ are all zero.

  We compute $H^{2}_*(S^2\sP_{6}(M))$,  breaking up the resolution of $\sP_{6}$ (suppressing the letter $M$) into short exact sequences:
  \begin{equation}\label{P_{5}} \begin{split}
 0 \to \wedge^2\sP_{7}  \to \wedge^2\sL'_{7} \to  \sL'_{7}\otimes \sP_{6}  \to S^2\sP_{6}\to 0\\
 0 \to S^2\sL'_{9}  \to S^2\sL'_{8} \to  \sL'_{8}\otimes \sP_{7}  \to \wedge^2\sP_{7}\to 0
  \end{split}
 \end{equation}
 $H^{2}_*(S^2\sP_{6}(M))$ will vanish since $H^{2}_*(\sP_{6})$, $H^{4}_*(\sP_{7})$  and $H^{6}_*(S^2\sL'_{9})$ are all zero.

  \end{proof}

   \begin{corollary} Let $n\geq 8$. Let $\sE$ be a rank two vector bundle on $\pee n$ with $H^i_*(\sE)=0$ for $i=3,\dots n-3$. Then $\sE$ splits.
  \end{corollary}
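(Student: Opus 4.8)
The plan is to reduce the corollary to Theorem~\ref{P7} together with the earlier structural results by inducting (or rather, directly reducing) the general case on \pee n to a computation analogous to the one carried out for \pee 8. The hypothesis $H^i_*(\sE)=0$ for $i=3,\dots,n-3$ says precisely that the only possible non-zero intermediate cohomology modules are $H^1_*, H^2_*, H^{n-2}_*, H^{n-1}_*$, so if $\sE$ is not split then $\sE$ has isolated cohomology of type $(n,2)$ with $N=H^2_*(\sE)$ and $M=H^{n-2}_*(\sE)$ both non-zero. Since $n\geq 8$ means $n\geq 4k$ with $k=2$, this is exactly the configuration ruled out by the type-$(n,k)$ analysis; thus the corollary should follow once the general non-existence theorem for type $(n,2)$ with $n\geq 8$ is in hand.

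Concretely, I would start from the non-split sequence $0\to\sP_2(N)\to\sP\oplus\sF\to\sP_{n-2}(M)\to 0$ provided by Proposition~\ref{type 1} and Proposition~\ref{Split case} (whose hypotheses $n\geq 7$, $k<\tfrac n2$ are met for $n\geq 8$, $k=2$). I would then form the two four-term sequences \ref{first8} and \ref{second8} with $6$ replaced throughout by $n-2$, and split into the same two cases according to whether $N$ is cyclic. In Case~1 ($N$ cyclic) I would use Lemma~\ref{cyclic modules} to get $H^3_*(S^2\sP_2(N))=0$ while $H^3_*(\sP_2(N)\otimes[\sP\oplus\sF])\neq 0$, forcing $H^3_*(\sD)\neq 0$; then the vanishing of $H^3_*(\wedge^2\sP)$ (from $H^3_*(\sP)=0$) and a resolution chase showing $H^2_*(\wedge^2\sP_{n-2}(M))=0$ should again make $H^3_*(\sD)$ simultaneously zero and non-zero. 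In Case~2 ($N$ non-cyclic) I would run the parallel argument with sequence \ref{second8}, using $H^4_*(\wedge^2\sP_2(N))\neq 0$ against the vanishing of $H^3_*,H^4_*,H^5_*$ of $\sP$ to locate a contradiction in $H^3_*(\sD)$.

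The key point that makes all the cohomology chases go through is the gap between $k=2$ and $n-k=n-2$: when $n\geq 8$ this gap has width $n-4\geq 4$, so the syzygy bundles $\sP_2(N)$ and $\sP_{n-2}(M)$ have their non-trivial cohomology widely separated, and the intermediate degrees $3,4,5$ (where one evaluates $H^*_*(\sP)$ and the tensor/symmetric/exterior powers) all fall in the vanishing range $1<i<n-1,\ i\neq 2,n-2$. This is the precise role of the phrase ``the proof utilizes the space between $k$ and $n-k$ when $n\geq 4k$.'' For $n=8$ the degrees $3,4,5$ sit exactly in the gap, and for $n>8$ the gap is even larger, so the same vanishing statements — $H^3_*(\sP)=H^4_*(\sP)=H^5_*(\sP)=0$ and the corresponding vanishing for the powers of $\sP_{n-2}(M)$ — hold a fortiori. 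Thus the verbatim computations of Theorem~\ref{P7} go through with $6\rightsquigarrow n-2$, and I would simply indicate that the degree bookkeeping remains valid because every auxiliary cohomology group invoked lives in this widened gap.

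I expect the main obstacle to be verifying that the resolution-chase vanishing statements for $S^2\sP_{n-2}(M)$ and $\wedge^2\sP_{n-2}(M)$ (the analogues of sequences \ref{P_{6}} and \ref{P_{5}}) still land in the correct degrees for general $n$: one must check that the relevant cohomology of the free-bundle terms $\wedge^2\sL'_{n+1}$, $S^2\sL'_{n+1}$ and of $\sP_{n-1}(M)$ vanishes at the precise homological degree needed, which for $\pee n$ means confirming that the only surviving cohomology of a $\sP_{j}$ sits at levels $0,j,n$ and that none of these coincide with the target degree $2$ or $3$ when $n\geq 8$. Once that degree count is confirmed uniformly in $n$, the contradiction $H^3_*(\sD)$ both zero and non-zero closes the argument exactly as on \pee 8, and the corollary follows.
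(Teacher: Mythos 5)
Your proposal is correct, but it takes a genuinely different route from the paper. The paper's own proof is three lines: induction on $n$, with Theorem \ref{P7} as the base case $n=8$; for the inductive step it restricts $\sE$ to a hyperplane $H$, uses the restriction sequence $H^i_*(\sE)\to H^i_*(\sE_H)\to H^{i+1}_*(\sE(-1))$ to get $H^i_*(\sE_H)=0$ for $i=3,\dots,n-4$, concludes by the inductive hypothesis that $\sE_H$ splits on \pee{n-1}, and then that $\sE$ splits because splitting descends from a hyperplane section (Horrocks). You instead rerun the whole \pee 8 computation with $6$ replaced by $n-2$, and your degree bookkeeping does check out: the minimal resolution of $\sP_{n-2}(M)$ has exactly three free terms $\sL'_{n-1},\sL'_{n},\sL'_{n+1}$ for every $n$, so the resolution chases (for $\wedge^2\sP_{n-2}(M)$ in the cyclic case, and for $S^2\sP_{n-2}(M)$ and $\sP_{n-2}(M)\otimes\sP$ in the non-cyclic case) have the same length for all $n$, and they only require the vanishing of $H^2_*(\sP_{n-2})$, $H^4_*(\sP_{n-1})$, $H^6_*$ of a free bundle, and $H^3_*,H^4_*,H^5_*(\sP)$ --- which is exactly where the hypothesis $n\geq 8$ enters, since it forces $\{3,4,5\}\cap\{2,n-2\}=\emptyset$. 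What the paper's induction buys is brevity, at the cost of invoking the hyperplane splitting criterion; what your argument buys is a uniform, induction-free proof that is in effect the $k=2$ case of the paper's final theorem. That is worth noting: the final theorem deliberately routes $k=2$ through this corollary, because its cyclic Case 1 uses the wedge-square sequence (\ref{second}) and would need $H^{2k-2}_*(\sP_{n-k}(M)\otimes\sP)=0$, which fails for $k=2$ since $2k-2=k$ and $H^k_*(\sP)=N\neq 0$; your use (following Theorem \ref{P7}) of the symmetric-square sequence (\ref{first8}) in the cyclic case is precisely what avoids that collision. One small correction: $H^3_*(\wedge^2\sP)=0$ does not follow formally ``from $H^3_*(\sP)=0$''; the vanishing $H^i_*(\wedge^2\sP)=0$ for $i=3,4$ is the content of Lemma \ref{H1(wedge2) is non-zero} applied with $l=3,4$ (via the sequences (\ref{wedgetwos})), equivalently of the Remark following the definition of isolated cohomology, and that is what you should cite there.
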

  \begin{proof} Use induction on $n$. The case $n=8$ is proved in the above theorem. Assume the result for $n-1$. Let $\sE$ be a rank two vector bundle on $\pee n$ with $H^i_*(\sE)=0$ for $i=3,\dots n-3$. For a hyperplane $H$, by the restriction sequence in cohomology $$ H^{i}_*(\sE)\to H^i_*(\sE_H)\to H^{i+1}_*(\sE(-1))$$ we get that $H^i_*(\sE_H)=0$ for $i=3,\dots n-4$ on $\pee {n-1}$. So $\sE_H$ splits and hence also $\sE$.
   \end{proof}

The theorem above can be generalized to arbitrary $k$ using similar calculations.

 \begin{theorem} Let $n\geq 4k$, with $k>1$. Then there cannot exist a rank two bundle $\sE$ on \pee n,  for which the only non-zero intermediate cohomology modules are $H^1_*(\sE)$, $H^k_*(\sE) =N$, $H^{n-k}_*(\sE) = M $, and $H^{n-1}_*(\sE)$.
  \end{theorem}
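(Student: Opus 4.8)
The plan is to mirror the structure of the $\pee 8$ result in Theorem \ref{P7}, using the extra cohomological room guaranteed by the hypothesis $n \geq 4k$. By Proposition \ref{type 1}, the bundle $\sP$ sits in a non-split (by Proposition \ref{Split case}) sequence
\begin{equation}\label{plan-basic} 0 \to \sP_k(N) \to \sP \oplus \sF \to \sP_{n-k}(M) \to 0, \end{equation}
and I would take the two induced sequences coming from $S^2$ and $\wedge^2$ of \eqref{plan-basic}, exactly as in \eqref{first8} and \eqref{second8}, which after splicing out the free-bundle pieces read
\begin{equation}\label{plan-S2} 0 \to S^2\sP_k(N) \to \sP_k(N)\otimes[\sP\oplus\sF] \to \wedge^2\sP \oplus [\sP\otimes\sF] \oplus \wedge^2\sF \to \wedge^2\sP_{n-k}(M) \to 0, \end{equation}
\begin{equation}\label{plan-wedge} 0 \to \wedge^2\sP_k(N) \to \wedge^2\sP \oplus [\sP\otimes\sF] \oplus \wedge^2\sF \to \sP_{n-k}(M)\otimes[\sP\oplus\sF] \to S^2\sP_{n-k}(M) \to 0. \end{equation}
The strategy is to locate a single intermediate degree $j$ strictly between $k$ and $n-k$ at which one of these sequences forces a contradiction: the Remark after the definition of isolated cohomology guarantees $H^j_*(\wedge^2\sP) = 0$ for every $j \neq 1,k,n-k,n-1$, so if I can show that the middle term of \eqref{plan-S2} or \eqref{plan-wedge} is forced to have nonzero $H^j_*$ at such a $j$, I am done.

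The key computational input is Theorem \ref{Jyoti} (Jyotilingam's result): for $k > 2$, applying it to $\sP_k(N) \otimes \sP_{n-k}(M)$ shows $H^{(n-k)+1}_*(\sP_k(N)\otimes\sP_{n-k}(M)) \neq 0$, and $(n-k)+1$ lies strictly between $k$ and $n-1$ precisely because $n \geq 4k$ gives plenty of separation, so the hypothesis of Theorem \ref{Jyoti} that $H^i_*(\sQ)$ vanishes in the intermediate range $i = l-1,\dots,l-k+2$ is met. I would split \eqref{plan-S2} and \eqref{plan-wedge} into short exact sequences $0 \to S^2\sP_k \to \sP_k \otimes[\sP\oplus\sF] \to \sD \to 0$ and $0 \to \sD \to \wedge^2\sP\oplus\cdots \to \wedge^2\sP_{n-k}(M) \to 0$, then chase cohomology through the connecting maps, exactly as in the two cases of Theorem \ref{P7}. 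For the vanishing of the flanking terms $H^\bullet_*(S^2\sP_k(N))$, $H^\bullet_*(\wedge^2\sP_{n-k}(M))$, $H^\bullet_*(\sP_{n-k}(M)\otimes\sP)$, etc., I would feed minimal free resolutions of $N$ and $M$ into the $S^2/\wedge^2$ filtration sequences (as in \eqref{P_{6}} and \eqref{P_{5}}) and read off that the relevant cohomology vanishes because $\sP$, $\sP_k$, $\sP_{n-k}$ have cohomology concentrated away from the degree being tracked — again using that $n \geq 4k$ keeps $k$, $n-k$, and the shifted degrees $j\pm 1$ mutually separated.

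As in Theorem \ref{P7}, the argument must split according to whether $N$ is cyclic, since for cyclic $N$ the bundle $\sP_2(N)$ (or its higher-$k$ analogue) has degenerate $\wedge^2$ behavior and one must instead work with \eqref{plan-S2} and the cokernel computation of Lemma \ref{cyclic modules}; for non-cyclic $N$ one works with \eqref{plan-wedge} and the nonvanishing of $H^{\bullet}_*(\wedge^2\sP_k(N))$ obtained from $0 \to \wedge^2\sP_k(N) \to \wedge^2\sL_k \to \sL_k\otimes\sP_{k-1} \to S^2\sP_{k-1} \to 0$. For general $k$ the syzygy bundles $\sP_k(N)$ require iterating the filtration sequences $0 \to \sP_i \to \sL_i \to \sP_{i-1} \to 0$ several times to reach the degree where Lichtenbaum/Jyotilingam produces a nonzero $\mathrm{Tor}$, and the main obstacle will be bookkeeping: verifying that at the single chosen degree $j$ every competing term in the long exact sequences genuinely vanishes, so that the nonzero $\mathrm{Tor}$ class survives to $H^j_*$ of the middle term while the Remark forces it to be zero. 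The hypothesis $n \geq 4k$ is exactly what creates enough gap between the two clusters of nonzero cohomology (near $k$ and near $n-k$) to guarantee a clean intermediate degree for this chase, and the proof should reduce to checking the inequalities $k < j < n-k$ and that the shifts stay inside $(1,n-1)$ at each step.
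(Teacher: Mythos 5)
Your skeleton is the paper's: the non-split sequence from Propositions \ref{type 1} and \ref{Split case}, the two induced four-term $S^2/\wedge^2$ sequences, the breakup through a middle cokernel $\sD$, and a cyclic/non-cyclic case division. But your key computational input is misplaced. Applying Theorem \ref{Jyoti} to $\sP_k(N)\otimes\sP_{n-k}(M)$ is exactly how the paper disposes of the \emph{split} case (Proposition \ref{Split case}); once the sequence is non-split, the bundle $\sP_k(N)\otimes\sP_{n-k}(M)$ appears in neither of your two sequences --- the tensor terms actually present are $\sP_k(N)\otimes[\sP\oplus\sF]$ and $\sP_{n-k}(M)\otimes[\sP\oplus\sF]$, so its nonvanishing cohomology says nothing about your middle terms. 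The nearest legitimate repair is to apply Theorem \ref{Jyoti} to $\sP_k(N)\otimes\sP$ (with $\sQ=\sP$, $l=n-k$; the hypotheses hold since $n\geq 4k$ keeps the range $n-2k+2,\dots,n-k-1$ clear of $k$), which gives $H^{n-k+1}_*(\sP_k(N)\otimes\sP)\neq 0$.

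Even with that repair, the chase fails, because $j=n-k+1$ is the wrong degree: note it violates your own requirement $k<j<n-k$, sitting just \emph{above} the $M$-cluster instead of inside the gap. Concretely, in your $S^2$-chase the second short exact sequence is $0\to\sD\to\wedge^2\sP\oplus[\sP\otimes\sF]\oplus\wedge^2\sF\to\wedge^2\sP_{n-k}(M)\to 0$, whose long exact sequence gives
$$H^{n-k}_*\bigl(\wedge^2\sP\oplus\cdots\bigr)\to H^{n-k}_*\bigl(\wedge^2\sP_{n-k}(M)\bigr)\to H^{n-k+1}_*(\sD)\to H^{n-k+1}_*\bigl(\wedge^2\sP\oplus\cdots\bigr)=0,$$
and neither $H^{n-k}_*(\wedge^2\sP_{n-k}(M))$ nor $H^{n-k}_*(\wedge^2\sP)$ is forced to vanish --- degree $n-k$ is an allowed nonzero degree in the Remark, and Proposition \ref{type 2} shows $H^{n-k}_*(\wedge^2\sP_{n-k})$ is typically an extension of nonzero modules. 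So a nonzero class in $H^{n-k+1}_*(\sD)$ is simply absorbed and no contradiction results; the $\wedge^2$-chase fails the same way. This is precisely why the paper runs the pincer near degree $2k$ rather than near $n-k$: it uses only sequence (\ref{second}), takes as nonvanishing input $H^{2k-1}_*(\wedge^2\sP_k(N))\neq 0$ or $H^{2k}_*(\wedge^2\sP_k(N))\neq 0$ --- obtained not from Theorem \ref{Jyoti} but by pushing Lemma \ref{cyclic modules} (cyclic case) or the low-degree computations (non-cyclic case, and $k$ odd) up the syzygy filtration, where each step shifts the degree by two and swaps $S^2\leftrightarrow\wedge^2$, forcing the even/odd-$k$ case division your plan never addresses --- and then shows $H^{2k-2}_*(\sD)$ (resp.\ $H^{2k-1}_*(\sD)$) vanishes because every flanking term on the right is a $\sP_{n-k}(M)$-type bundle evaluated in degrees at most $3k-1<n-k$, where it has no cohomology. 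The hypothesis $n\geq 4k$ is exactly what fits the whole band $2k-3,\dots,3k-1$ strictly inside $(k,n-k)$; your proposal identifies the need for such a window but then works at a degree outside it.
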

  \begin{proof} The case $k=2$ was done in the Corollary above. So we assume that $k>2$. The proof will analyze the consequences of the sequence

\begin{equation} \label{Horr-type1} 0 \to \sP_k(N) \to \sP \oplus \sF  \to \sP_{n-k}(M) \to 0 \end{equation}
which is non-split by Proposition \ref{Split case}. We get the collateral sequence:

\begin{equation} \ \label{second} 0 \to \wedge^2\sP_k(N) \to \wedge^2\sP \oplus [\sP\otimes \sF] \oplus \wedge^2\sF  \to \sP_{n-k}(M) \otimes [\sP \oplus \sF ]   \to S^2\sP_{n-k}(M) \to 0, \end{equation}

  We will prove it using several cases.
    \vskip 5 pt
  \noindent {\bf Case 1}  The case where $N$ is cyclic, $k$ is even and $>2$.

  We look at the sequence (\ref{second}) which
   breaks into
  \begin{equation}\label{breakup2} \begin{split} 0 \to \wedge^2\sP_k(N) \to \wedge^2\sP \oplus [\sP\otimes \sF] \oplus \wedge^2\sF  \to \sD \to 0, \\
 0 \to \sD \to \sP_{n-k}(M) \otimes [\sP \oplus \sF ]   \to S^2\sP_{n-k}(M) \to 0
 \end{split}
 \end{equation}
 $H^3_*(\wedge^2\sP_2(N)) \neq 0$ by Lemma \ref{cyclic modules}. This yields $H^{2k-1}_*(\wedge^2\sP_k(N))\neq 0$, since $n>2k-1$. On the other hand, $H^{2k-1}_*(\sP)$ and $H^{2k-1}_*(\wedge^2\sP)$ are zero, since $k < 2k-1 < n-k$ when $n \geq 4k$.  Hence $H^{2k-2}_*(\sD) \neq 0$ using the first short exact sequence in (\ref{breakup2}).

 In the second sequence in (\ref{breakup2}), $H^{2k-2}_*(\sP_{n-k}(M)\otimes \sF)=0$ since $2k-2 \neq n-k$.
 $H^{2k-2}_*(\sP_{n-k}(M)\otimes \sP)$ can be studied using a resolution for $\sP_{n-k}(M)$ and tensoring with $\sP$.
  $$ 0 \to \sL'_{n+1}\otimes \sP \to \sL'_{n}\otimes \sP \to \dots \sL'_{n-k+2}\otimes \sP \to \sL'_{n-k+1}\otimes \sP \to \sP_{n-k}(M)\otimes \sP \to 0.$$

  Then $H^{2k-2}_*(\sP_{n-k}(M)\otimes \sP) =0$ provided $H^{2k-2}_*(\sP), H^{2k-1}_*(\sP) , \dots, H^{3k-2}_*(\sP)$ are all zero. Since $n \geq 4k$, $n-k > 3k-2$ and since $k>2$, $k< 2k-2$. Hence these vanishings hold.

  We compute $H^{2k-3}_*(S^2\sP_{n-k}(M))$,  breaking up the resolution of $\sP_{n-k}$ (suppressing the letter $M$) into short exact sequences:
  \begin{equation}\label{P_{n-2}} \begin{split}
 0 \to \wedge^2\sP_{n-k+1}  \to \wedge^2\sL'_{n-k+1} \to  \sL'_{n-k+1}\otimes \sP_{n-k}  \to S^2\sP_{n-k}\\
 0 \to S^2\sP_{n-k+2}  \to \wedge^2\sL'_{n-k+2} \to  \sL'_{n-k+2}\otimes \sP_{n-k+1}  \to \wedge^2\sP_{n-k+1}\\
 0 \to \wedge^2\sP_{n-k+3}  \to \wedge^2\sL'_{n-k+3} \to  \sL'_{n-k+3}\otimes \sP_{n-k+2}  \to S^2\sP_{n-k+2}\\
 \vdots  \hskip 3cm \vdots \hskip 3cm \vdots\\
 0 \to S^2\sL'_{n+1}  \to S^2\sL'_{n} \to  \sL'_{n}\otimes \sP_{n-1}  \to \wedge^2\sP_{n-1}\\
 \end{split}
 \end{equation}
 $H^{2k-3}_*(S^2\sP_{n-k}(M))$ will vanish provided $H^{2k-3}_*(\sP_{n-k})$, $H^{2k-1}_*(\sP_{n-k+1})$, $\dots,$ $H^{4k-5}_*(\sP_{n-1})$  and $H^{4k-3}_*(S^2\sL'_{n+1})$ are all zero. $H^{4k-3}_*(S^2\sL'_{n+1})=0$ since $n> 4k-3$. For the others, $H^{2k-3+2i}_*(\sP_{n-k+i}) =0$ since $n-k+i > 2k-3+2i$ when $0 \leq i \leq k-1$.
We have concluded that   $H^{2k-2}_*(\sD) =0$ from the second sequence, contradicting the earlier result of being non-zero.
 \vskip 5 pt
  \noindent {\bf Case 2}  The case where $N$ is non-cyclic, $k>2$ is even.

 This is very similar to Case 1. We use the same sequence (\ref{second}). Now $H^4_*(\wedge^2\sP_2(N)) \neq 0$ (see Theorem \ref{P7}, Case 2). Hence $H^{2k}_*(\wedge^2\sP_k(N))\neq 0$, since $n>2k$. $H^{2k}_*(\sP)$ and $H^{2k}_*(\wedge^2\sP)$ are zero, since $k < 2k < n-k$, hence $H^{2k-1}_*(\sD) \neq 0$.

 Again, $H^{2k-1}_*(\sP_{n-k}(M)\otimes \sF)=0$ since $2k-1 \neq n-k$ and $H^{2k-1}_*(\sP_{n-k}(M)\otimes \sP) =0$ since
  $n-k>3k-1$ and $k<2k-1$. Lastly, $H^{2k-2}_*(S^2\sP_{n-k}(M)) =0$ since $n>4k-2$ and $n-k+i > 2k-2+2i$ when $0 \leq i \leq k-1$. Hence $H^{2k-1}_*(\sD)$ is also equal to $0$.

 \vskip 5 pt
  \noindent {\bf Case 3}  The case where  $k$ is odd.

 Whether $N$ is cyclic or not, starting with $S^2\sL_0$, we get $H^2_*(\wedge^2\sP_1(N))\neq 0$. Since $k$ is odd, this results in $H^{2k}_*(\wedge^2\sP_k(N))\neq 0$ as in Case 2. We can now use sequence (\ref{second}) and copy the proof in Case 2.

  \end{proof}


\begin{thebibliography}{99}

 \bibitem{B-V} W. Barth and A. Van de Ven,   A decomposability criterion for algebraic
2-bundles on projective spaces, Invent. Math.  25  (1974), 91--106.

\bibitem{C-T} I. Coand\v a and G.Trautmann,  The splitting criterion of Kempf and the Babylonian tower theorem.
Comm. Algebra  34  (2006),  no. 7, 2485--2488.

 \bibitem {Decker}  W. Decker, Monads and cohomology modules of rank two bundles, Comp. Math. 76 (1990) 7-17.

 \bibitem{E} L. Ein,  An analogue of Max Noether's theorem, Duke Math. J. 52 (1985) 689-706.

 \bibitem{EG} G. Evans and P. Griffiths, The syzygy problem, Annals of Math. 114 (1981) 323-353.

 \bibitem{Horrocks} G. Horrocks, Vector bundles on the punctured spectrum of a ring, Proc. London Math.
Soc. (3) 14 (1964), 689-713.

 \bibitem{H-M} G. Horrocks and D. Mumford, A rank two vector bundle on \pee 4 with $15,000$ symmetries, Topology 12 (1973) 63-81

 \bibitem{J}  P. Jothilingam, A note on grade, Nagoya Math J. 59(1975) 149-152.

 \bibitem{L} S. Lichtenbaum  On the vanishing of Tor in regular local rings,
  Illinois J. Math. 10: (1966) 220-226.

  \bibitem{M}
F. Malaspina, Monads and rank 3 vector bundles on quadrics, Ann. Mat. Pura  Appl 188, (2009), 455-465.

\bibitem{MM}
F. Malaspina and P. Rao, Horrocks Correspondence on ACM Varieties, Alg. Number Theory \textbf{9} (4) (2016), 981--1003.

 \bibitem{M-R}   N. Mohan Kumar and P. Rao, Buchsbaum bundles on \pee n, J. Pure \& Applied Algebra  152,  (2000), 195-199.

 \bibitem{M-P-R}  N. Mohan Kumar, C. Peterson and P. Rao,  Monads on projective spaces,  Manuscripta Mathematica  112  (2003), 183-189.

  \bibitem{Rao}  P. Rao,  A note on cohomology modules of rank two bundles, J. of Algebra 86, (1984), 23-34.
 \end{thebibliography}
\end{document}